\documentclass[a4paper,12pt]{article}
\usepackage{amssymb,amsmath,amsthm,times,mathrsfs,fullpage}
\usepackage[pdftex]{hyperref}
\hypersetup{plainpages=True, pdfstartview=FitH, bookmarksopen=true,
pdfpagemode=none, colorlinks=true,linkcolor=blue,citecolor=blue}
\usepackage{tikz,pgflibraryshapes,pgf}
\usepackage{color, colortbl}
\usetikzlibrary{shapes,arrows,shadows,snakes}%

\def\pf{\noindent \emph{Proof.}\ }
\def\qed{{\quad\rule{1mm}{3mm}\,}}
\def\le{\leqslant}
\def\ge{\geqslant}

\begin{document}

\newtheorem{thm}{Theorem}
\renewcommand*{\thethm}{\Alph{thm}}
\newtheorem{cor}{Corollary}
\newtheorem{lmm}{Lemma}
\newtheorem{conj}{Conjecture}
\newtheorem{pro}{Proposition}
\renewcommand*{\thepro}{\Alph{pro}}
\newtheorem{df}{Definition}
\theoremstyle{remark}\newtheorem{Rem}{Remark}
\newcommand{\bigcell}[2]{\begin{tabular}{@{}#1@{}}#2\end{tabular}}

\title{\textbf{Dependence between path-length and size in
random digital trees}}
\author{
Michael Fuchs\\
    Department of Applied Mathematics\\
    National Chiao Tung University\\
    Hsinchu 300\\
    Taiwan \and
Hsien-Kuei Hwang\\
    Institute of Statistical Science\\
    Academia Sinica\\
    Taipei 115\\
    Taiwan
}
\date{\today}
\maketitle


\begin{abstract}

We study the size and the external path length of random tries and
show that they are asymptotically independent in the asymmetric case
but strongly dependent with small periodic fluctuations in the
symmetric case. Such an unexpected behavior is in sharp contrast to
the previously known results on random tries that the size is totally
positively correlated to the internal path length and that both tend
to the same normal limit law. These two dependence examples provide
concrete instances of bivariate normal distributions (as limit laws)
whose correlation is $0$, $1$ and periodically oscillating. Moreover,
the same type of behaviors is also clarified for other classes
of digital trees such as bucket digital trees and Patricia tries.

\end{abstract}

\noindent \textbf{AMS 2010 Subject Classifications.} 60C05 60F05
68P05 05C05 68W40

\noindent\textbf{Keywords.} Random tries, covariance, total path
length, Pearson's correlation coefficient, asymptotic normality,
poissonization/de-Poissonization, integral transform, contraction
method.

\section{Introduction}\label{intro}

Tries are one of the most fundamental tree-type data structures in
computer algorithms; see Knuth \cite{Knuth98} and Mahmoud
\cite{Mahmoud92} for a general introduction. Their general efficiency
depends on several shape parameters, the principal ones including the
depth, the height, the size, the internal path-length (IPL), and the
external path-length (EPL); see below for a more precise description
of those studied in this paper. While most of these measures have
been extensively investigated in the literature, we are concerned
here with the question: \emph{how does the EPL depend on the size in
a random trie?} Surprisingly, while the pair
$(\text{IPL},\text{size})$ is known to have asymptotic correlation
coefficient tending to one and to have the same normal limit law
after each being properly normalized (see \cite{FuHwZa,FuLe}), this
paper aims to show that the pair $(\text{EPL},\text{size})$ exhibits
a completely different behavior depending on the parameter of the
underlying random bits being biased or unbiased. This is a companion
paper to \cite{ChFuHwNe} where we clarified the dependence structure
of another class of search trees in computer algorithms.

Given a sequence of binary strings (or keys), one can construct a
binary trie (very similar to constructing a dictionary of binary
words) as follows. If $n=1$, then the trie consists of a single
root-node holding the sole string; if $n\ge2$, the root is used to
direct the strings into the corresponding subtree: if the first bit
of the input string is $0$ (or $1$), then the string goes to the left
(or right) subtree; strings directed to the same subtree are then
processed recursively in the same manner but instead of splitting
according to the first bit, the second bit of each string is then
used. In this way, a binary dictionary-type tree with two types of
nodes is constructed: external nodes for storing strings and internal
nodes for splitting the strings; see Figure~\ref{fg-trie} for a trie
of seven strings.

\vspace*{0.2cm}
\begin{figure}[!ht]
\begin{center}
\begin{tikzpicture}[level/.style={sibling distance = 5cm/#1,
level distance = 0.75cm}, treenode/.style = {align=center,
inner sep=0pt, text centered}, arn_n/.style = {treenode, circle,
white, draw=black,fill=black, text width=0.6em},
arn_x/.style = {rounded corners=0.2ex,
drop shadow={shadow xshift=0.3ex,shadow yshift=-0.3ex},
fill=white,treenode, rectangle, draw=black,  minimum width=3em,
minimum height=0.7em, font=\scriptsize}]
\node [arn_n] {}
	child{ node [arn_n] {}  
		child{ node [arn_x]
		{\colorbox{black}{\textcolor{white}{$00011100$}}}  
			edge from parent
			node[left,yshift=0.15cm,font=\scriptsize] {$0$}
		}
		child{ node [arn_n] {} 
		child{ node [arn_x]
		{\colorbox{black}{\textcolor{white}{$01010100$}}}
			edge from parent
			node[left,yshift=0.15cm,font=\scriptsize] {$0$}}	
			child{ node [arn_x]
			{\colorbox{black}{\textcolor{white}{$01100111$}}}
			edge from parent
			node[right,yshift=0.15cm,font=\scriptsize] {$1$}}	
			edge from parent
			node[right,yshift=0.15cm,font=\scriptsize] {$1$}
            }
		edge from parent
		node[above,font=\scriptsize] {$0$}
	}
	child{ node [arn_n] {}	
            child{ node [arn_x]
			{\colorbox{black}{\textcolor{white}{$10111010$}}} 
			edge from parent
			node[left,yshift=0.15cm,font=\scriptsize] {$0$}
            }
            child{ node [arn_n] {}
			child[]{ node [arn_n] {}
				child[]{ node [arn_n] {}
					child[sibling distance = 1.5cm]{ node [arn_x]
					{\colorbox{black}{\textcolor{white}{$11000011$}}}
					edge from parent node[left,yshift=0.10cm,
					font=\scriptsize] {$0$}} 
					child[sibling distance = 1.5cm]{ node [arn_n] {}
						child[sibling distance = 1.8cm]{ node [arn_x]
						{\colorbox{black}{\textcolor{white}
						{$11001000$}}}
						edge from parent node[left,yshift=0.00cm,
						font=\scriptsize] {$0$}}	
						child[sibling distance = 1.5cm]{ node [arn_x]
						{\colorbox{black}{\textcolor{white}
						{$11001010$}}} edge from parent node[right,
						yshift=0.00cm,font=\scriptsize] {$1$}}	
					edge from parent node[right,yshift=0.10cm,
					font=\scriptsize] {$1$}
					}
				edge from parent node[left,yshift=0.10cm,
				font=\scriptsize] {$0$}
				}
				child[missing]{ node [arn_n] {}}
				edge from parent node[left,yshift=0.10cm,
				font=\scriptsize] {$0$}
			}
			child[missing]{ node [arn_r] {}}
			edge from parent
			node[right,yshift=0.15cm,font=\scriptsize] {$1$}
            }
		edge from parent
		node[above,font=\scriptsize] {$1$}
	};
\end{tikzpicture}
\caption{\emph{A trie with $n=7$ records: the (filled) circles
represent internal nodes and rectangles holding the binary strings
are external nodes. In this example, $S_n=8$, $K_n=27$, and
$N_n=18$.}}
\label{fg-trie}
\end{center}
\end{figure}

The random trie model we consider here assumes that each of the $n$
binary keys is an infinite sequence of independent Bernoulli bits
each with success probability $0<p<1$. Then the trie constructed from
this sequence is a random trie.

We define three shape parameters in a random trie of $n$ strings:
\begin{itemize}
	\item Size $S_n$: the total number of internal nodes used
          (the circle nodes in Figure~\ref{fg-trie});
	\item IPL (or node path-length, NPL) $N_n$: the sum
	      of the distance between the root and each internal node;
    \item EPL (or key path-length, KPL) $K_n$: the sum
	      of the distance between the root and each external node.
\end{itemize}
We will use mostly NPL in place of IPL, and KPL in place of EPL, the
reason being an easier comparison with the corresponding results
derived for random $m$-ary search trees in the companion paper
\cite{ChFuHwNe}; see below for more details.

By the recursive definition and our model assumption, we have the
following recurrence relations
\begin{equation}\label{dist-rec}
    \left\{
	\begin{split}
		S_n&\stackrel{d}{=}S_{B_n}+S_{n-B_n}^{*}+1,\\
		K_n&\stackrel{d}{=}K_{B_n}+K_{n-B_n}^{*}+n,\\
		N_n&\stackrel{d}{=}N_{B_n}+N_{n-B_n}^{*}
		+S_{B_n}+S_{n-B_n}^{*},
	\end{split}\qquad(n\ge2), \right.
\end{equation}
with the initial conditions $S_n=K_n=N_n=0$ for $n\le 1$, where
$B_n=\text{Binom}(n,p)$ denotes a binomial distribution with
parameters $n$ and $p\in(0,1)$. Also $(S_n^*), (K_n^*)$, and
$(N_n^*)$ are independent copies of $(S_n), (K_n)$ and $(N_n)$,
respectively. While many stochastic properties of these random
variables are known (see Cl\'{e}ment et al. \cite{CFV01}, Devroye
\cite{De05} and \cite{FuHwZa} and many references cited there), much
less attention has been paid to their correlation and dependence
structure.

The asymptotic behaviors of the moments of random variables defined
on tries typically depend on the ratio $\frac{\log p}{\log q}$ being
rational or irrational, where $q=1-p$. So we introduce, similar to
\cite{FuHwZa}, the notation
\begin{align}\label{gk}
	\mathscr{F}[g](z)
	=\begin{cases}{
	    \sum_{k\in\mathbb{Z}}g_k
		z^{-\chi_k}},&{ \text{if\ }
		\frac{\log p}{\log q}\in\mathbb{Q}};\\
		g_0,&{
		\text{if\ } \frac{\log p}{\log q}\not\in\mathbb{Q}},
	\end{cases}
\end{align}
where $g_k$ represents a sequence of (Fourier) coefficients and
$\chi_k= \frac{2rk\pi i}{\log p}$ when $\frac{\log p}{\log q}= \frac
rl$ with $r$ and $l$ coprime. In simpler words, $\mathscr{F}[g](z)$
is a periodic function in the rational case, and a constant in the
irrational case. We also use $\mathscr{F}[\cdot](z)$ as a generic
symbol if the exact form of the underlying sequence matters less, and
in this case each occurrence may not represent the same function.

With this notation, the asymptotics of the mean and the variance are
summarized in the following table; see \cite{FuHwZa,JaSz,Mahmoud92}
and the references therein for more information.
\begin{table}[!h]
\begin{center}
\begin{tabular}{|c||c|c|} \hline
	Shape parameters & $\frac1n(\text{mean})\sim$
	& $\frac1n(\text{variance})\sim$  \\ \hline
	Size $S_n$ & $\mathscr{F}[\cdot](n)$
	& $\mathscr{F}[g^{(1)}](n)$ \\ \hline
	NPL $N_n$ & $\frac{\mathbb{E}(S_n)}{n}\cdot \frac{\log n}{h}
	$ & $\frac{\mathbb{V}(S_n)}{n}
	\cdot \frac{(\log n)^2}{h^2}$ \\ \hline
	KPL $K_n$ & $\frac{\log n}{h}+\mathscr{F}[\cdot](n)$
	& \textcolor{red}{
	$\frac{pq\log^2\frac pq}{h^2}$}
	$\cdot\frac{\log n}{h}+\mathscr{F}[g^{(3)}](n)$
	\\ \hline \hline
	Depth $D_n$ & $\mathbb{E}(D_n) =
	\frac{\mathbb{E}(K_n)}{n}$ &
	$\mathbb{V}(D_n) =
	\frac{\mathbb{V}(K_n)}{n}+O(1)$
	\\ \hline
\end{tabular}
\end{center}
\vspace*{-.4cm}
\caption{\emph{Asymptotic patterns of the means and the variances of
the shape parameters discussed in this paper. Here
$\mathscr{F}[\cdot](n)$ differs from one occurrence to another and
$h=-p\log p-q\log q$ denotes the entropy. Expressions for $g^{(1)}_k$
and $g^{(3)}_k$ will be given below. All three random variables $S_n,
N_n, K_n$ are asymptotically normally distributed.}}
\label{tb-SKN}
\end{table}

Note specially that the leading constant
\[
    \lambda=\lambda_p := \frac{pq\log^2\frac pq}{h^3}
	= \frac{(p\log^2p+q\log^2q)-h^2}{h^3}
\]
in the asymptotic approximation to $\mathbb{V}(K_n)$ equals zero when
$p=q$, implying that $\mathbb{V}(K_n)$ is not of order $n\log n$ but
of linear order in the symmetric case. \emph{This change of order can
be regarded as the source property distinguishing between the
dependence and independence of $K_n$ on $S_n$.}

On the other hand, we have the relation $\mathbb{E}(K_n)
=\mathbb{E}(D_n)n$ between the external path length and the depth
$D_n$, which is defined to be the distance between the root and a
randomly chosen external node (each with the same probability).
Furthermore, we also have the asymptotic equivalent $\mathbb{V}(K_n)
\sim \mathbb{V}(D_n)n$ when $p\neq 1/2$ (or $\lambda>0$), and a
central limit theorem for $D_n$; see Devroye \cite{De98}.

From Table~\ref{tb-SKN}, we see roughly that each internal node
contributes $\frac{\log n}h$ to $N_n$, namely, that $N_n\approx
S_n\cdot \frac{\log n}h$. Indeed, it was proved in \cite{FuHwZa} that
the correlation coefficient of $S_n$ and $N_n$ satisfies
\begin{align}\label{rho-Sn-Nn}
    \rho(S_n,N_n)\sim 1\qquad(0<p<1).
\end{align}
Such a linear correlation was further strengthened in \cite{FuLe},
where it was proved that both random variables tend to the \emph{same}
normal limit law $\mathcal{N}_1$ (with zero mean and unit variance)
\[
	\left(\frac{S_n-\mathbb{E}(S_n)}
	{\sqrt{\mathbb{V}(S_n)}},\frac{N_n-\mathbb{E}(N_n)}
	{\sqrt{\mathbb{V}(N_n)}}\right)
	\stackrel{d}{\longrightarrow}
	(\mathcal{N}_1,\mathcal{N}_1),
\]
where $\stackrel{d}{\longrightarrow}$ denotes convergence in
distribution. In terms of the bivariate normal law $\mathcal{N}_2$
(see Tong \cite{To90}), we can write
\[
	\left(\frac{S_n-\mathbb{E}(S_n)}
	{\sqrt{\mathbb{V}(S_n)}},\frac{N_n-\mathbb{E}(N_n)}
	{\sqrt{\mathbb{V}(N_n)}}\right)^{\intercal}
	\stackrel{d}{\longrightarrow} \mathcal{N}_2(0,E_2),
\]
where $E_2=\scriptsize \begin{pmatrix} 1 & 1\\ 1& 1\end{pmatrix}$ is
a singular matrix and $\mathbf{A}^\intercal$ denotes the transpose of
matrix $\mathbf{A}$.

We show that the correlation and dependence of $K_n$ on $S_n$ are
drastically different. We start with their correlation coefficient.

\begin{thm}\label{main-thm-1} The covariance of the number of
internal nodes and KPL in a random trie of $n$ strings satisfies
\[
    \mathrm{Cov}(S_n,K_n) \sim n \mathscr{F}[g^{(2)}](n),
\]
where $g^{(2)}_k$ is given in Proposition \ref{fc-cov} below, and
their correlation coefficient satisfies
\begin{align}\label{rho-Sn-Kn}
	\rho(S_n,K_n)\sim
	\begin{cases}
		0,&\text{if}\ p\ne \frac12\\
		F(n),&\text{if}\ p=\frac12.
	\end{cases}
\end{align}
Here $F(n)=\frac{\mathscr{F}[g^{(2)}](n)}
{\sqrt{\mathscr{F}[g^{(1)}](n) \mathscr{F}[g^{(3)}](n)}}$ is a
periodic function with average value $0.927\cdots$.
\end{thm}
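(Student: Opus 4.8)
The plan is to attack the covariance $\mathrm{Cov}(S_n,K_n)$ via a poissonization argument, exploiting the additive structure of the recurrences \eqref{dist-rec}. First I would set up the bivariate Poisson generating function: let $N$ be a Poisson random variable with parameter $z$, and study the poissonized variables $\tilde S(z),\tilde K(z)$ together with their mixed second moment. Because both $S_n$ and $K_n$ are additive shape parameters with toll functions $1$ and $n$ respectively, the Poisson transform of the covariance satisfies a functional equation of the form
\[
	\tilde C(z)=p\,\tilde C(pz)+q\,\tilde C(qz)+\tilde T(z),
\]
where the toll term $\tilde T(z)$ collects the contributions of the cross terms $S_{B_n}(n-B_n)$, $S^*_{n-B_n}B_n$ and the products of the two separate toll functions with the opposite subtree's parameter — here I would use the known first-order poissonized means of $S$ and $K$ (the $\log n/h$ behaviour from Table~\ref{tb-SKN}) to write $\tilde T(z)$ explicitly up to exponentially small errors. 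The standard route for such functional equations is to apply the Mellin transform: the Mellin transform of $\tilde C$ is $\tilde C^\ast(s)=\tilde T^\ast(s)/(1-p^{-s}-q^{-s})$, and the asymptotics of $\tilde C(z)$ as $z\to\infty$ are read off from the poles of this meromorphic function. The denominator $1-p^{-s}-q^{-s}$ has a simple zero at $s=-1$ (giving the leading linear term $n\,\mathscr{F}[g^{(2)}](n)$) and, in the rational case $\frac{\log p}{\log q}\in\mathbb Q$, a whole vertical line of zeros at $s=-1+\chi_k$, which is exactly what produces the periodic function $\mathscr{F}[g^{(2)}]$; in the irrational case those extra zeros are off the line $\Re s=-1$ and only the constant $g^{(2)}_0$ survives. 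This also tells me where the coefficients $g^{(2)}_k$ come from, to be recorded in Proposition~\ref{fc-cov}.

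Next I would de-poissonize. Since $S_n$ and $K_n$ and their product concentrate well (the relevant moments grow polynomially and the distributions are asymptotically normal), the analytic de-poissonization lemmas of Jacquet–Szpankowski apply and give $\mathrm{Cov}(S_n,K_n)=\tilde C(n)+(\text{lower order})$, so that $\mathrm{Cov}(S_n,K_n)\sim n\,\mathscr{F}[g^{(2)}](n)$ as claimed. For the correlation coefficient I then simply divide: $\rho(S_n,K_n)=\mathrm{Cov}(S_n,K_n)/\sqrt{\mathbb V(S_n)\mathbb V(K_n)}$. Using the variance asymptotics from Table~\ref{tb-SKN}, in the asymmetric case $p\ne\frac12$ the denominator is of order $\sqrt{n\cdot n\log n}=n\sqrt{\log n}$ while the numerator is only of order $n$, so $\rho(S_n,K_n)\to0$. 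In the symmetric case $p=q=\frac12$ the constant $\lambda$ vanishes, $\mathbb V(K_n)$ drops to linear order $n\,\mathscr{F}[g^{(3)}](n)$, and both numerator and denominator are of order $n$, leaving the periodic ratio $F(n)=\mathscr{F}[g^{(2)}](n)/\sqrt{\mathscr{F}[g^{(1)}](n)\mathscr{F}[g^{(3)}](n)}$. Computing the average value of $F(n)$ over a period amounts to evaluating the constant ($k=0$) Fourier coefficients $g^{(1)}_0,g^{(2)}_0,g^{(3)}_0$, which are the residues at $s=-1$ of the corresponding Mellin transforms; plugging in the explicit values gives the numerical constant $0.927\cdots$.

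The main obstacle I expect is the correct derivation and bookkeeping of the toll term $\tilde T(z)$ for the covariance functional equation. The cross terms mix $S$ with $K$ across the two independent subtrees, so $\tilde T(z)$ involves the poissonized means $\mathbb E\tilde S(z)$ and $\mathbb E\tilde K(z)$ — each of which is itself only known asymptotically (with its own periodic fluctuation in the rational case) — and one must track these to sufficient precision that the error terms stay $o(n)$ after de-poissonization, and in the symmetric case even $o(n)$ relative to a linear main term, which is more delicate. A secondary subtlety is verifying the growth/analyticity conditions needed for de-poissonization of the mixed moment $\mathbb E(S_nK_n)$ uniformly in a cone; this is routine in spirit but requires care because $K_n$ carries an extra $\log n$ factor in its typical size. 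Once $\tilde T(z)$ is pinned down, the Mellin analysis and the final division are standard, and the periodicity structure falls out automatically from the location of the zeros of $1-p^{-s}-q^{-s}$.
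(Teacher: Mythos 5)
Your overall strategy—Poissonize, Mellin, de-Poissonize, then divide by the variances—is the same as the paper's, and your treatment of the correlation coefficient once the covariance asymptotics are known is correct. But two things in the covariance computation itself need fixing.

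First, the functional equation you write down is wrong: you have
\[
\tilde C(z)=p\,\tilde C(pz)+q\,\tilde C(qz)+\tilde T(z),
\]
but for tries the size and key path length are additive over the two subtrees without any $p,q$ prefactors, and the Poisson recursion at the level of (mixed) moments correspondingly reads $\tilde C(z)=\tilde C(pz)+\tilde C(qz)+\text{(toll)}$; the $p,q$ in front of $\tilde C(pz),\tilde C(qz)$ do not belong there. You in fact recover the correct denominator $1-p^{-s}-q^{-s}$ with its simple zero at $s=-1$ in the next sentence, which is inconsistent with the displayed equation—had you transformed the equation you wrote, you would have gotten $1-p^{1-s}-q^{1-s}$ with its zero at $s=0$. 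This is likely just a slip, but it needs to be repaired before anything downstream can be trusted.

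Second, and more substantively, the naive ``Poisson transform of the covariance'' $\tilde f_{1,1}(z)-\tilde f_{1,0}(z)\tilde f_{0,1}(z)$ does not satisfy a recursion with a tame toll function, nor does it de-Poissonize to $\mathrm{Cov}(S_n,K_n)$ at the required accuracy. The object one must work with is the \emph{corrected} Poissonized covariance
\[
\tilde C(z)=\tilde f_{1,1}(z)-\tilde f_{1,0}(z)\tilde f_{0,1}(z)-z\,\tilde f_{1,0}'(z)\,\tilde f_{0,1}'(z),
\]
analogous to the Poissonized variance in \cite{FuHwZa}. With this definition the toll term splits cleanly into $\tilde h_1(z)+\tilde h_2(z)$ where $\tilde h_2$ is exponentially small and $\tilde h_1(z)=pqz\bigl(\tilde f_{1,0}'(pz)-\tilde f_{1,0}'(qz)\bigr)\bigl(\tilde f_{0,1}'(pz)-\tilde f_{0,1}'(qz)\bigr)=O(|z|)$, and—crucially—one sees directly that $\tilde h_1\equiv0$ when $p=q$. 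Without the $-z\tilde f_{1,0}'\tilde f_{0,1}'$ correction, the cross terms involving $pz\tilde f_{1,0}'(pz)$ and the products of the subtree means do not cancel, you are left attempting to Mellin-transform and de-Poissonize a toll function of order $z\log z$ or worse, and the error terms do not stay $o(n)$ (which you yourself flag as the main obstacle—this is exactly where it bites). This device is the missing idea that makes your plan go through; once you plug it in, the rest (residue at $s=-1$ plus the vertical line $-1+\chi_k$ in the rational case, admissible de-Poissonization, division by the variance estimates from Table~\ref{tb-SKN}) is indeed as you describe.
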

The result \eqref{rho-Sn-Kn} is to be compared with \eqref{rho-Sn-Nn}
(which holds for all $p\in(0,1)$): \emph{the surprising difference
here comes not only from the (common) distinction between $p=\frac12$
and $p\ne \frac12$ but also from the (less expected) intrinsic
asymptotic nature.}

\begin{figure}[!h]
\begin{center}
\hspace*{0 cm}
\begin{tikzpicture}[]
\node[align=center] at (0,0) {
\includegraphics[height=4cm]{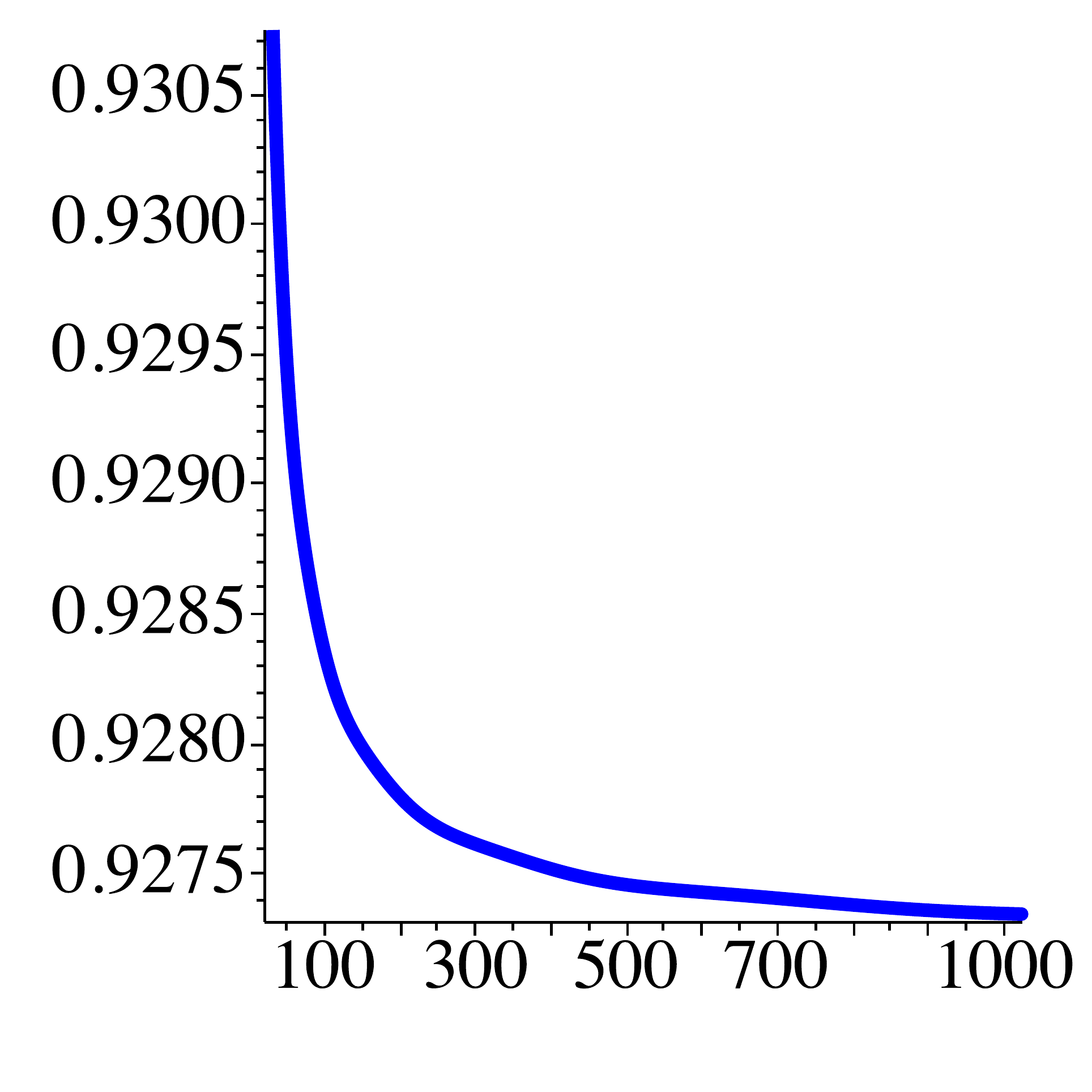}\hspace*{0 pt}
\includegraphics[height=4cm]{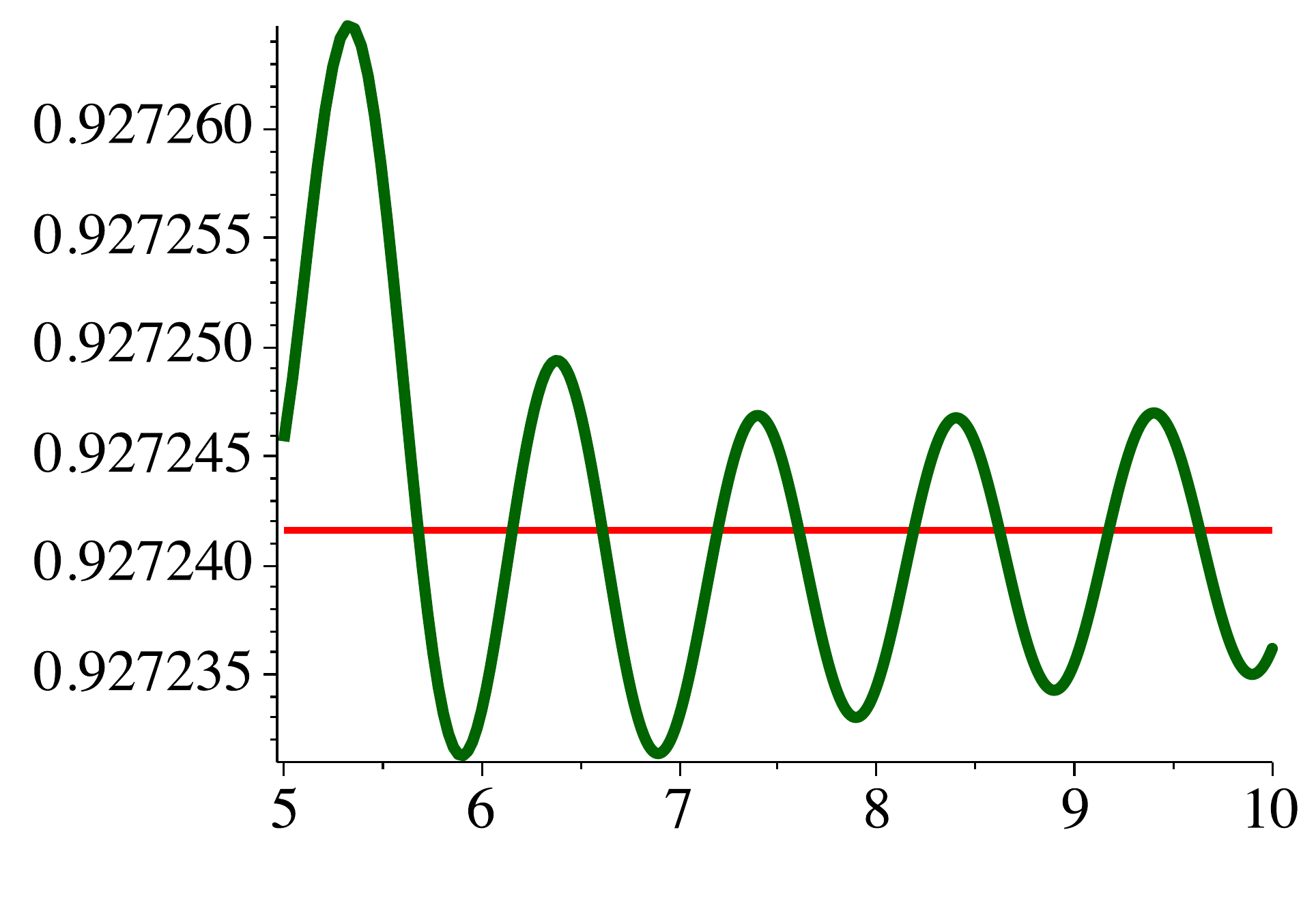}\hspace*{0 pt}
\includegraphics[height=4cm]{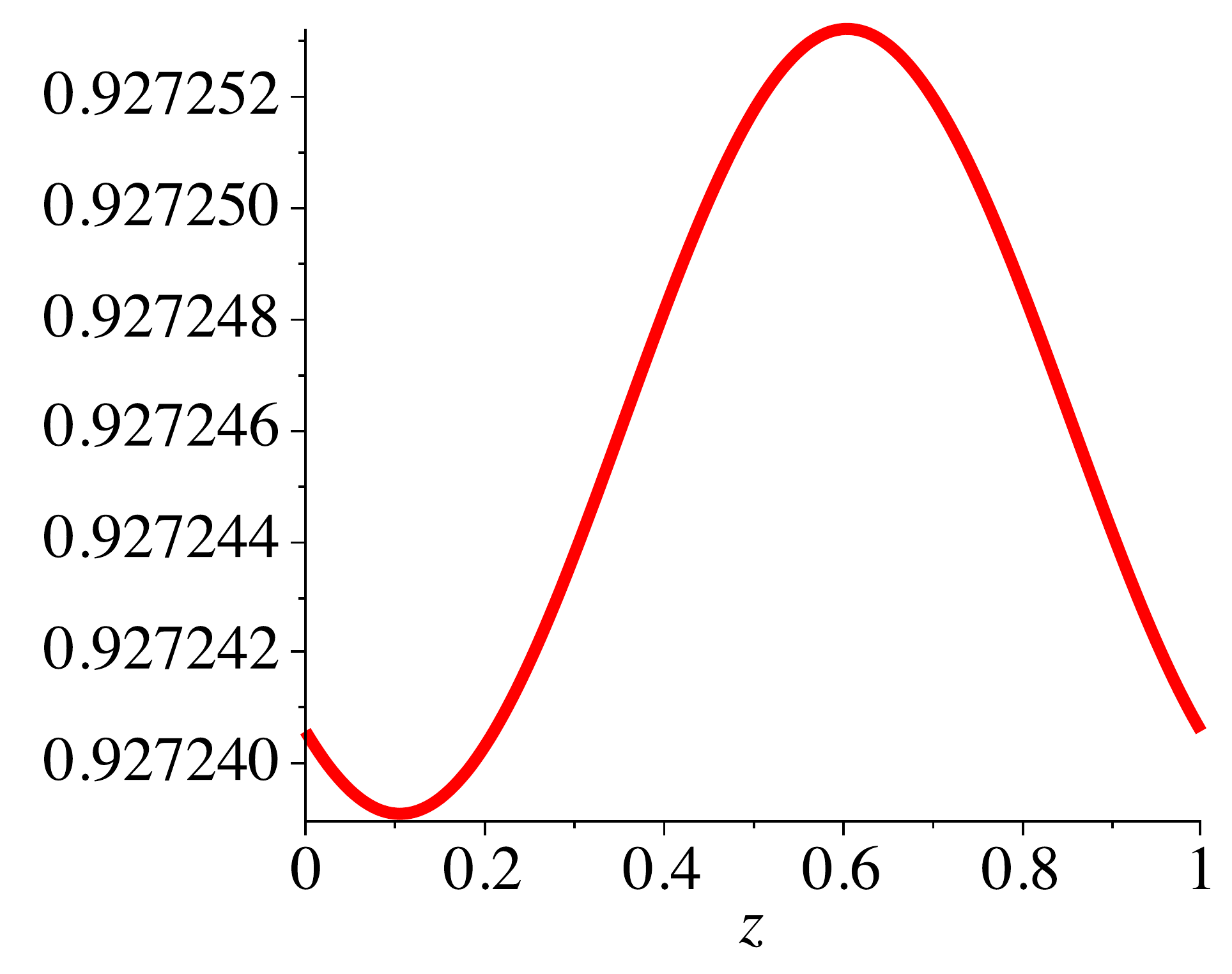}\hspace*{0 pt}
};
\end{tikzpicture}
\end{center}
\vspace*{-.4cm}
\caption{\emph{$p=\frac12$: periodic fluctuations of (i)
$\rho(S_n,K_n)$ (left) for $n=32,\dots, 1024$, (ii)
$\frac{\mathrm{Cov}(S_n,K_n)}{\sqrt{\mathbb{V}(S_n)
(\mathbb{V}(K_n)+1.046)}}$ (middle) in logarithmic scale, and (iii)
$F(n)$ by its Fourier series expansion (right). Note that the
fluctuations are only visible by a proper correction in the
denominator because the amplitude of $F$ is very small:
$|F(\cdot)|\le 1.5\times 10^{-5}$.}}
\end{figure}

Furthermore, we show that this different behavior cannot be ascribed
to the weak measurability of nonlinear dependence of Pearson's
correlation coefficient because the limiting distribution also
exhibits a similar dependence pattern. (For the univariate central
limit theorems implied by the result below, see Jacquet and
R\'{e}gnier \cite{JaRe} where such results were first established.)

\begin{thm}\label{main-thm-2}
\begin{itemize}
\item[(i)] For $p\ne \frac12$, we have
\[
	\left(\frac{S_n-\mathbb{E}(S_n)}
	{\sqrt{\mathbb{V}(S_n)}},\frac{K_n-\mathbb{E}(K_n)}
	{\sqrt{\mathbb{V}(K_n)}}\right)^{\intercal}
	\stackrel{d}{\longrightarrow} \mathcal{N}_2(0,I_2),
\]
where $I_2$ denotes the $2\times 2$ identity matrix.
\item[(ii)] For $p=\frac12$, we have
\[
	\Sigma_n^{-\frac12}
	\begin{pmatrix}
		S_n-\mathbb{E}(S_n) \\
		K_n-\mathbb{E}(K_n)
	\end{pmatrix}
		\stackrel{d}{\longrightarrow}\mathcal{N}_2(0,I_2),
\]
where $\Sigma_n$ denotes the (asymptotic) covariance matrix of $S_n$
and $K_n$:
\[
    \Sigma_n := n\begin{pmatrix}
	    \mathscr{F}[g^{(1)}](n)
		& \mathscr{F}[g^{(2)}](n) \\
		\mathscr{F}[g^{(2)}](n)
		& \mathscr{F}[g^{(3)}](n)
	\end{pmatrix}.
\]
\end{itemize}
\end{thm}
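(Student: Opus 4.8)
\noindent\emph{Proof of Theorem~\ref{main-thm-2} (sketch).}
The plan is to prove a single \emph{bivariate quasi-power} estimate for the joint characteristic (equivalently, moment generating) function of $(S_n,K_n)$ near the origin and then to conclude both parts by the multivariate continuity theorem. First I would poissonize: from \eqref{dist-rec}, the Poisson transform
\[
	\widetilde{\Phi}(z;s,t):=e^{-z}\sum_{n\ge0}\frac{z^{n}}{n!}\,\mathbb{E}\bigl(e^{sS_n+tK_n}\bigr)
\]
satisfies the functional equation
\[
	\widetilde{\Phi}(z;s,t)=e^{s}e^{z(e^{t}-1)}\,\widetilde{\Phi}(pze^{t};s,t)\,\widetilde{\Phi}(qze^{t};s,t)+e^{-z}\bigl(1+z-e^{s}(1+ze^{t})\bigr),
\]
the last term being exponentially negligible as $z\to\infty$ in a cone around $\mathbb{R}_{>0}$. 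Taking logarithms, $L(z;s,t):=\log\widetilde{\Phi}(z;s,t)$ obeys the almost linear equation $L(z)=s+z(e^{t}-1)+L(pze^{t})+L(qze^{t})+o(1)$, to which the Mellin-transform and bootstrapping arguments already used for the univariate size and path length (see \cite{FuHwZa,JaRe,JaSz}) apply.

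Carrying out this analysis, I expect to obtain, uniformly for $(s,t)$ in a small neighbourhood of the origin and $z\to\infty$ in the cone, an expansion
\[
	L(z;s,t)=z\,a(s,t)+z\log z\cdot b(s,t)+O(1),
\]
with $a,b$ analytic, $a(0,0)=b(0,0)=0$, $\nabla b(0,0)=0$, whose first two derivatives at the origin reproduce the mean and (co)variance asymptotics of Table~\ref{tb-SKN} and Theorem~\ref{main-thm-1}: after de-Poissonization the $\partial_t^{2}$ part yields the leading $n\log n$-term $\lambda_p\,n\log n$ of $\mathbb{V}(K_n)$ — which vanishes at $p=\tfrac12$, leaving only the linear-in-$z$, periodically modulated contribution there — whereas the \emph{mixed} term $\partial_s\partial_t$ is of exact order $z$, producing $\mathrm{Cov}(S_n,K_n)\sim n\,\mathscr{F}[g^{(2)}](n)$. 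Applying analytic de-Poissonization (the Jacquet--Szpankowski transfer theorem), after checking the cone and polynomial-growth conditions for $\widetilde{\Phi}(\cdot;s,t)$ — routine for $|s|,|t|$ small, the only new feature being the dilation $z\mapsto ze^{t}$, handled by restricting $t$ to a narrow vertical strip — one then gets
\[
	\log\mathbb{E}\bigl(e^{sS_n+tK_n}\bigr)=s\,\mathbb{E}(S_n)+t\,\mathbb{E}(K_n)+\tfrac12\bigl(s^{2}\mathbb{V}(S_n)+2st\,\mathrm{Cov}(S_n,K_n)+t^{2}\mathbb{V}(K_n)\bigr)+R_n(s,t),
\]
with $R_n$ cubically small in $(s,t)$ on a suitable polydisc.

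It then remains to insert the normalizations. For part (i) I would set $s=u_{1}/\sqrt{\mathbb{V}(S_n)}$ and $t=u_{2}/\sqrt{\mathbb{V}(K_n)}$; by Theorem~\ref{main-thm-1} the cross term is of order $\rho(S_n,K_n)=O\bigl((\log n)^{-1/2}\bigr)\to0$ while the diagonal terms tend to $\tfrac12(u_{1}^{2}+u_{2}^{2})$, so the joint characteristic function converges to that of $\mathcal{N}_2(0,I_2)$. For part (ii) I would instead substitute $(s,t)^{\intercal}=\Sigma_n^{-1/2}(u_{1},u_{2})^{\intercal}$, so that the quadratic form equals $u_{1}^{2}+u_{2}^{2}$ up to lower-order terms and $R_n\to0$, again giving $\mathcal{N}_2(0,I_2)$. (An alternative route for part (ii) is the contraction method applied to the bivariate additive recurrence $(S_n,K_n)^{\intercal}\stackrel{d}{=}(S_{B_n},K_{B_n})^{\intercal}+(S^{*}_{n-B_n},K^{*}_{n-B_n})^{\intercal}+(1,n)^{\intercal}$, renormalized by $\Sigma_n^{1/2}$; but the periodic fluctuations make the limiting fixed-point equation $n$-dependent, so the Poisson/Mellin route looks cleaner.)

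The step I expect to be the main obstacle is \emph{tracking the periodic fluctuations consistently} through the Mellin analysis and the de-Poissonization, so that the oscillating normalizations $\mathbb{V}(S_n),\mathbb{V}(K_n),\Sigma_n$ are legitimate. Concretely this demands a quasi-power statement in the spirit of Hwang in which the ``variance'' is allowed to be a bounded oscillating sequence — and, in the bivariate version, the entire matrix $\Sigma_n$ — rather than a constant, which forces one to control the Fourier coefficients $g^{(1)}_{k},g^{(2)}_{k},g^{(3)}_{k}$ uniformly in $(s,t)$. A second, part-(i)-specific point is to show that the mixed second derivative of $z\,a+z\log z\,b$ has \emph{no} $z\log z$ component, i.e.\ that $\mathrm{Cov}(S_n,K_n)$ is genuinely of order $n$ rather than $n\log n$; this is exactly the analytic content of Theorem~\ref{main-thm-1}, and it is what makes the limiting correlation $0$ in the asymmetric case and only a tiny periodic oscillation in the symmetric one.\qed
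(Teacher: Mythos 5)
Your proposal takes a genuinely different route from the paper's. The paper proves both parts via the \emph{multivariate contraction method} (Theorem~3.1 of Neininger--R\"uschendorf \cite{NeRu2}): it sets $\widehat{\Sigma}_n=\bigl(\begin{smallmatrix}\mathbb{V}(S_n)&\mathrm{Cov}(S_n,K_n)\\\mathrm{Cov}(S_n,K_n)&\mathbb{V}(K_n)\end{smallmatrix}\bigr)$, renormalizes the recurrence by $\widehat{\Sigma}_n^{-1/2}$, checks the $L_3$-convergences $M_n^{(i)}\to\sqrt{p}\,I_2,\sqrt{q}\,I_2$ and $b_n^{(i)}\to0$ using the explicit square-root/inverse-square-root formulas \eqref{A-half}--\eqref{A-half-inv} together with the strong law $B_n/n\to p$ and the differentiability of the periodic factors, and then identifies the unique fixed point as $\mathcal{N}_2(0,I_2)$; a short lemma ($\widehat{\Sigma}_n^{-1/2}\widetilde{\Sigma}_n^{1/2}\to I_2$, resp.\ positive definiteness of $\Sigma_n$) converts to the stated normalizations. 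Your Poisson/Mellin bivariate quasi-power approach is a plausible alternative — the functional equation for $\widetilde{\Phi}(z;s,t)$ you wrote is correct, and the structure $L(z;s,t)=z\,a+z\log z\cdot b+O(1)$ with $\partial_s\partial_t b(0,0)=0$ does capture why the cross term is $O(n)$ rather than $O(n\log n)$ — but it is considerably heavier to make rigorous: you need uniformity of the Mellin asymptotics in $(s,t)$, a de-Poissonization lemma that tolerates the $z\mapsto ze^{t}$ dilation, a bivariate quasi-power theorem in which the two coordinates scale at different rates ($\sqrt n$ vs.\ $\sqrt{n\log n}$) in case (i), and explicit control of the cubic remainder under a normalization that itself oscillates in case (ii). The paper's route takes the (co)variance asymptotics of Theorem~\ref{main-thm-1} as input and avoids all of that.

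Your stated reason for preferring Mellin over contraction is not correct and is worth flagging. You write that "the periodic fluctuations make the limiting fixed-point equation $n$-dependent." They do not: because the renormalization is by the \emph{actual} covariance matrix $\widehat{\Sigma}_n^{-1/2}$, the oscillations cancel in the ratios $\widehat{\Sigma}_n^{-1/2}\widehat{\Sigma}_{B_n}^{1/2}$ — each periodic entry is a smooth bounded function of $\log(\cdot)$, so by the law of large numbers and Taylor expansion these ratios converge almost surely (and in $L_3$) to the \emph{constant} scalar matrices $\sqrt p\,I_2$ and $\sqrt q\,I_2$. The limiting fixed-point equation is therefore the $n$-free
\[
	\begin{pmatrix}X_1\\X_2\end{pmatrix}\stackrel{d}{=}\sqrt p\begin{pmatrix}X_1\\X_2\end{pmatrix}+\sqrt q\begin{pmatrix}X_1^{*}\\X_2^{*}\end{pmatrix},
\]
whose unique solution with zero mean, identity covariance and finite third moment is $\mathcal{N}_2(0,I_2)$. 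The oscillating matrices only re-enter at the end, in the elementary lemma showing that $\widehat{\Sigma}_n$ may be replaced by $\Sigma_n$ (or, in the asymmetric case, by the diagonal matrix of variances). So the contraction route is not only viable but in fact cleaner here, precisely because it handles the fluctuations at the level of normalization rather than inside the characteristic-function asymptotics.

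One small additional point: in case (ii) you would also need to justify that $\Sigma_n$ (equivalently $\widehat{\Sigma}_n$) is invertible for large $n$ before inserting $\Sigma_n^{-1/2}$; the paper proves $\det\Sigma_n=n^{2}\mathscr{F}[g^{(1)}](n)\mathscr{F}[g^{(3)}](n)\bigl(1-F(\log_2 n)^2\bigr)>0$, using that the amplitude of $F$ is strictly below $1$. Your sketch silently assumes this.
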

Alternatively, we may define
\[
\Sigma_n:=n\begin{pmatrix}
\mathscr{F}[g^{(1)}](n) & \mathscr{F}[g^{(2)}](n) \\
\mathscr{F}[g^{(2)}](n) & \lambda\log n+\mathscr{F}[g^{(3)}](n)
\end{pmatrix}.
\]
Then both cases can be stated in one as
\[
	\Sigma_n^{-\frac12}
	\begin{pmatrix}
		S_n-\mathbb{E}(S_n) \\
		K_n-\mathbb{E}(K_n)
	\end{pmatrix}
		\stackrel{d}{\longrightarrow}\mathcal{N}_2(0,I_2).
\]
On the other hand, since for bivariate normal distribution, zero
correlation implies independence (see \cite{To90}), it is more
transparent to split the statement into two cases. See
Figure~\ref{fig-SK} for (Monte Carlo) 3D-plots of the joint
distributions of $(S_n,K_n)$ when $n=10^7$.

\begin{figure}[!h]
\begin{center}
\hspace*{0 cm}
\begin{tikzpicture}[]
\node[align=center] at (0,0) {
\includegraphics[width=5.5cm]{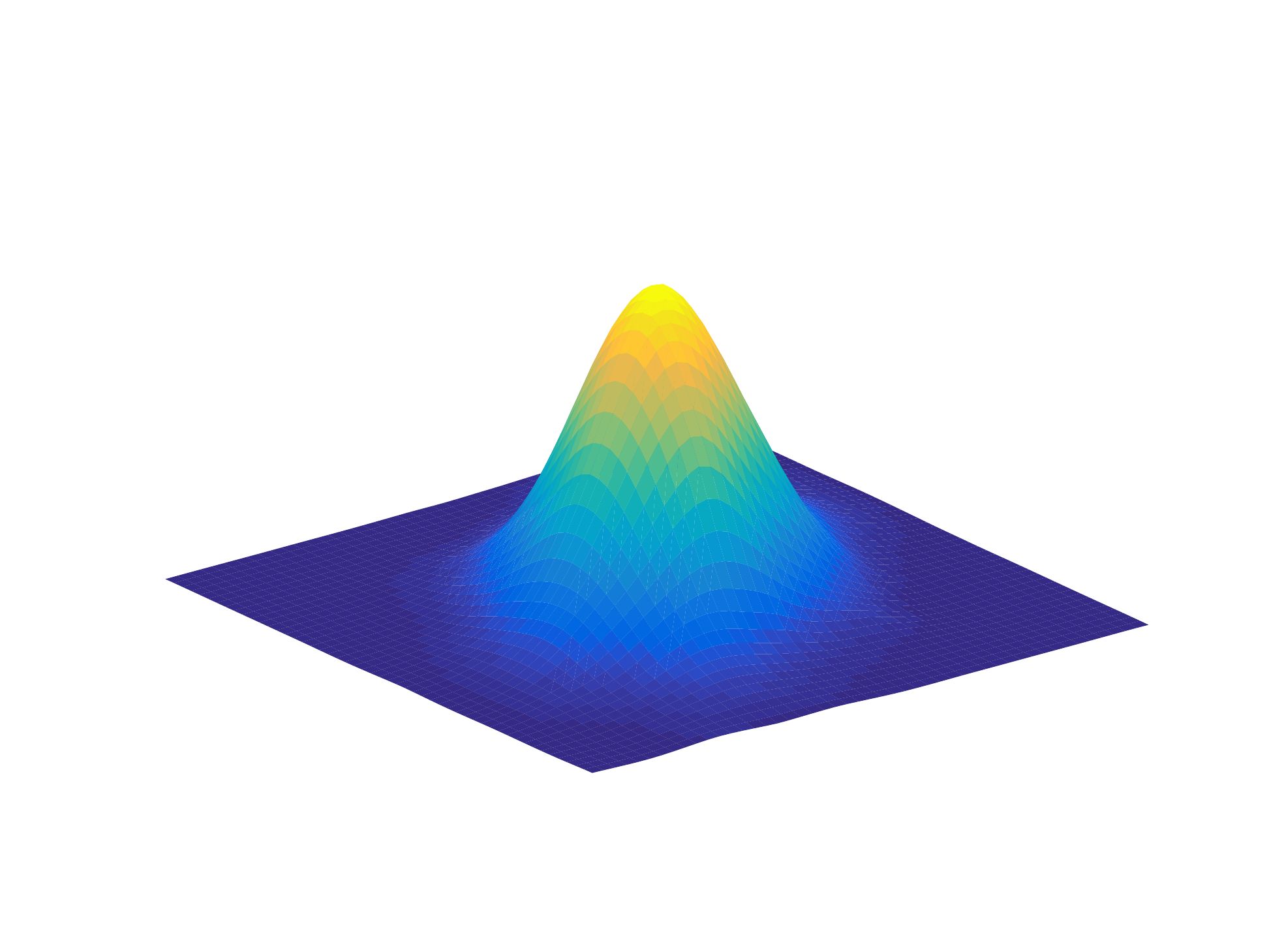}\hspace*{-15pt}
\includegraphics[width=5.5cm]{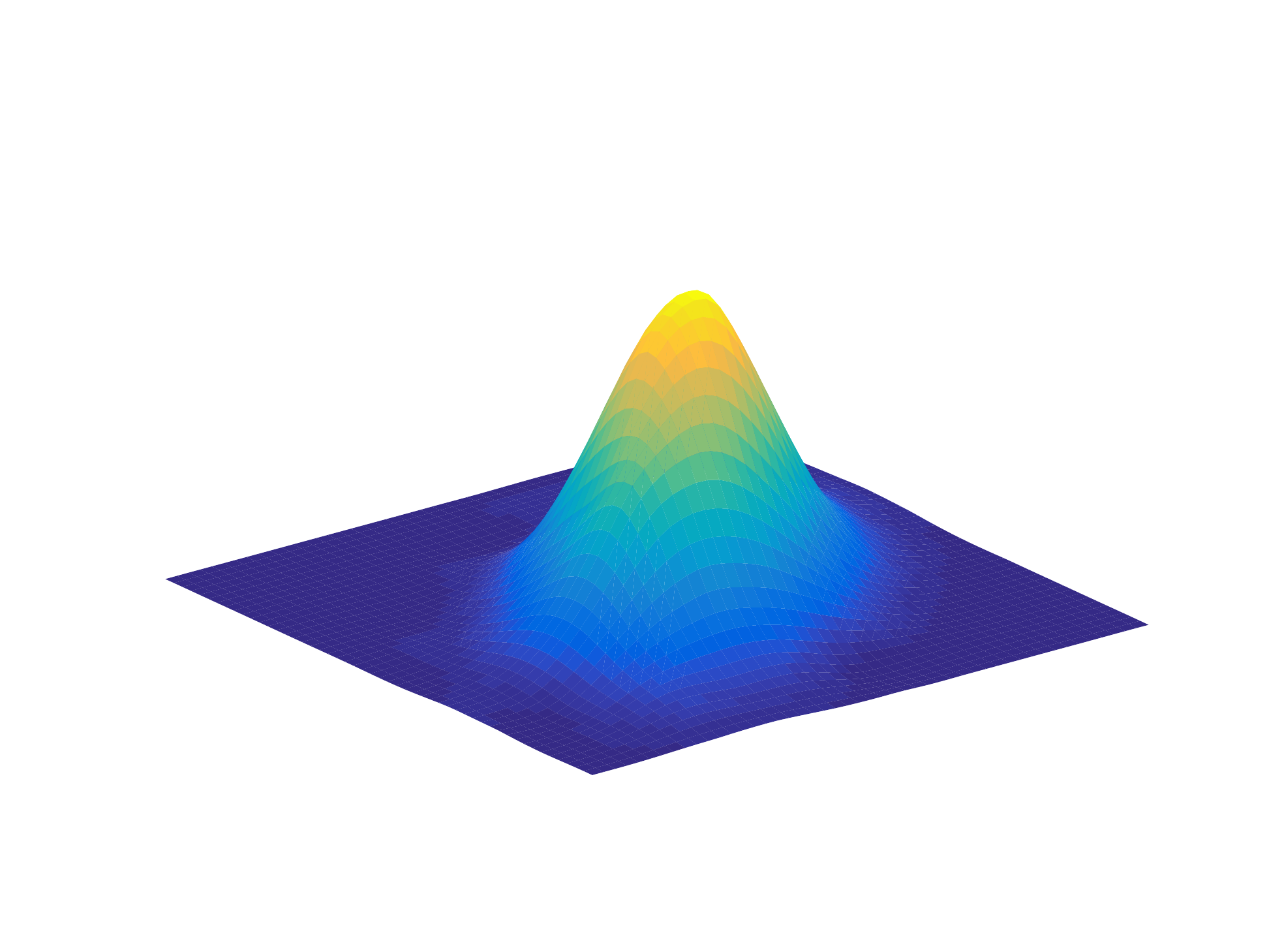}\hspace*{-15pt}
\includegraphics[width=5.5cm]{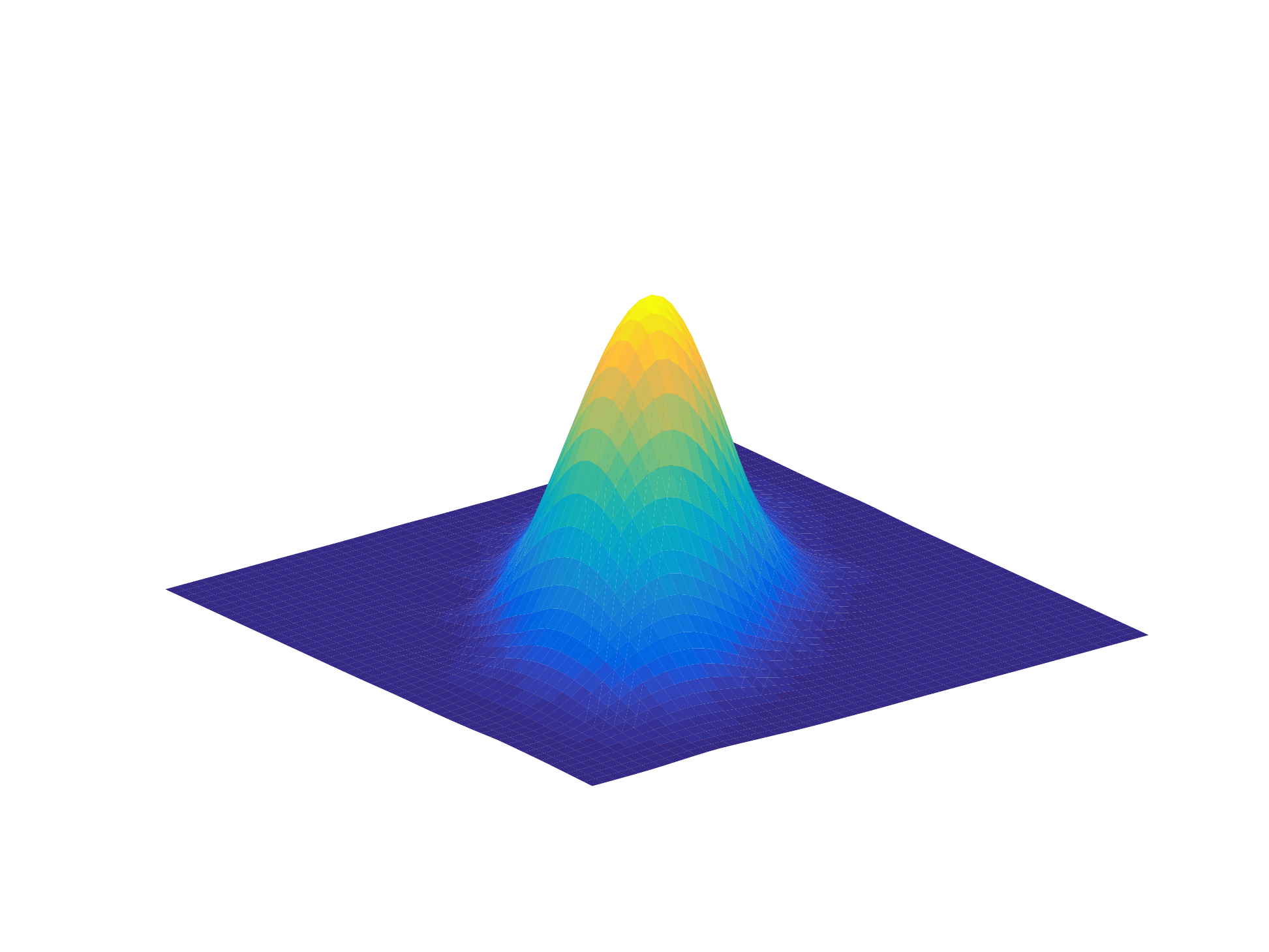}\\
\includegraphics[width=5.5cm]{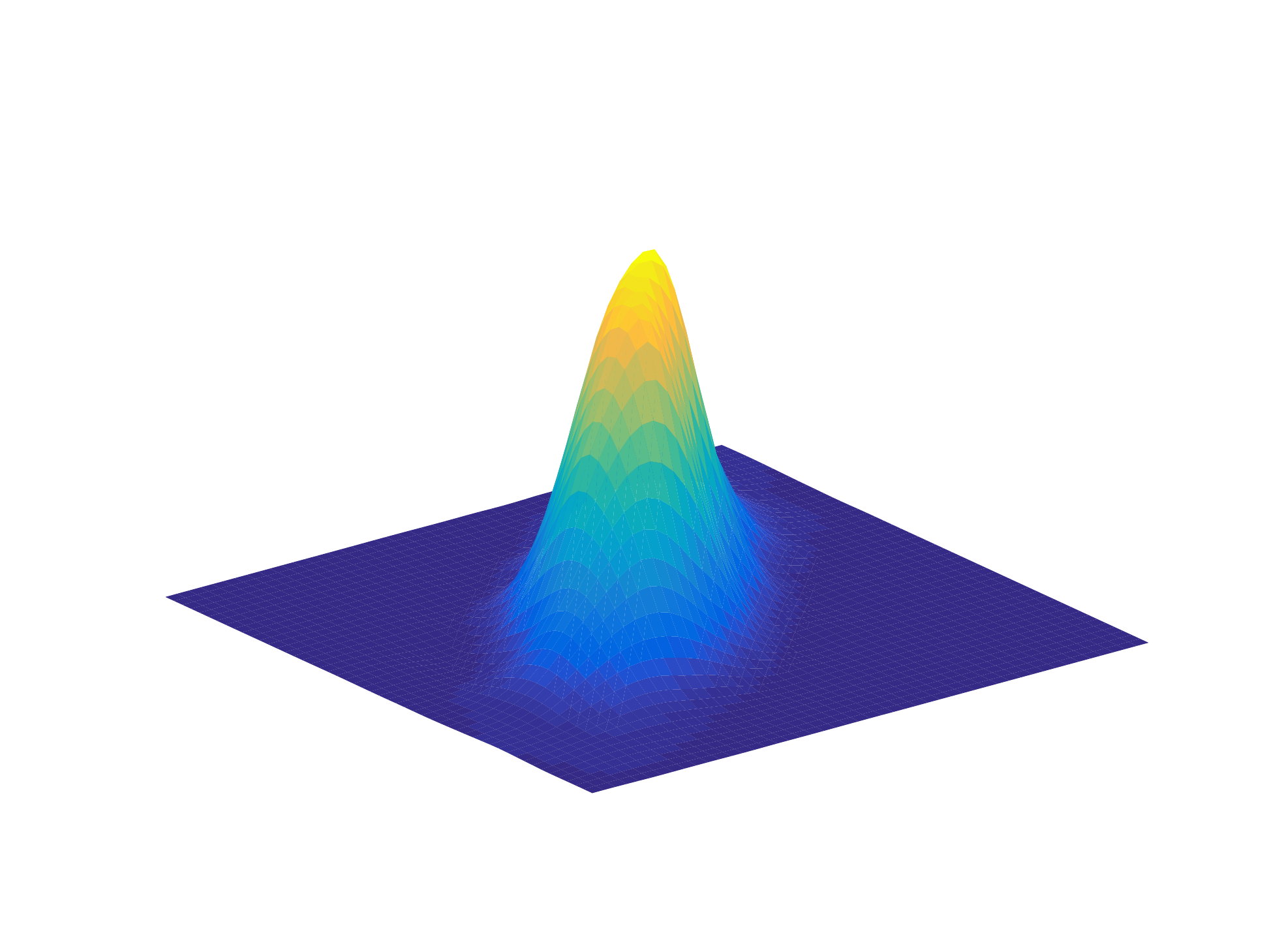}\hspace*{-15pt}
\includegraphics[width=5.5cm]{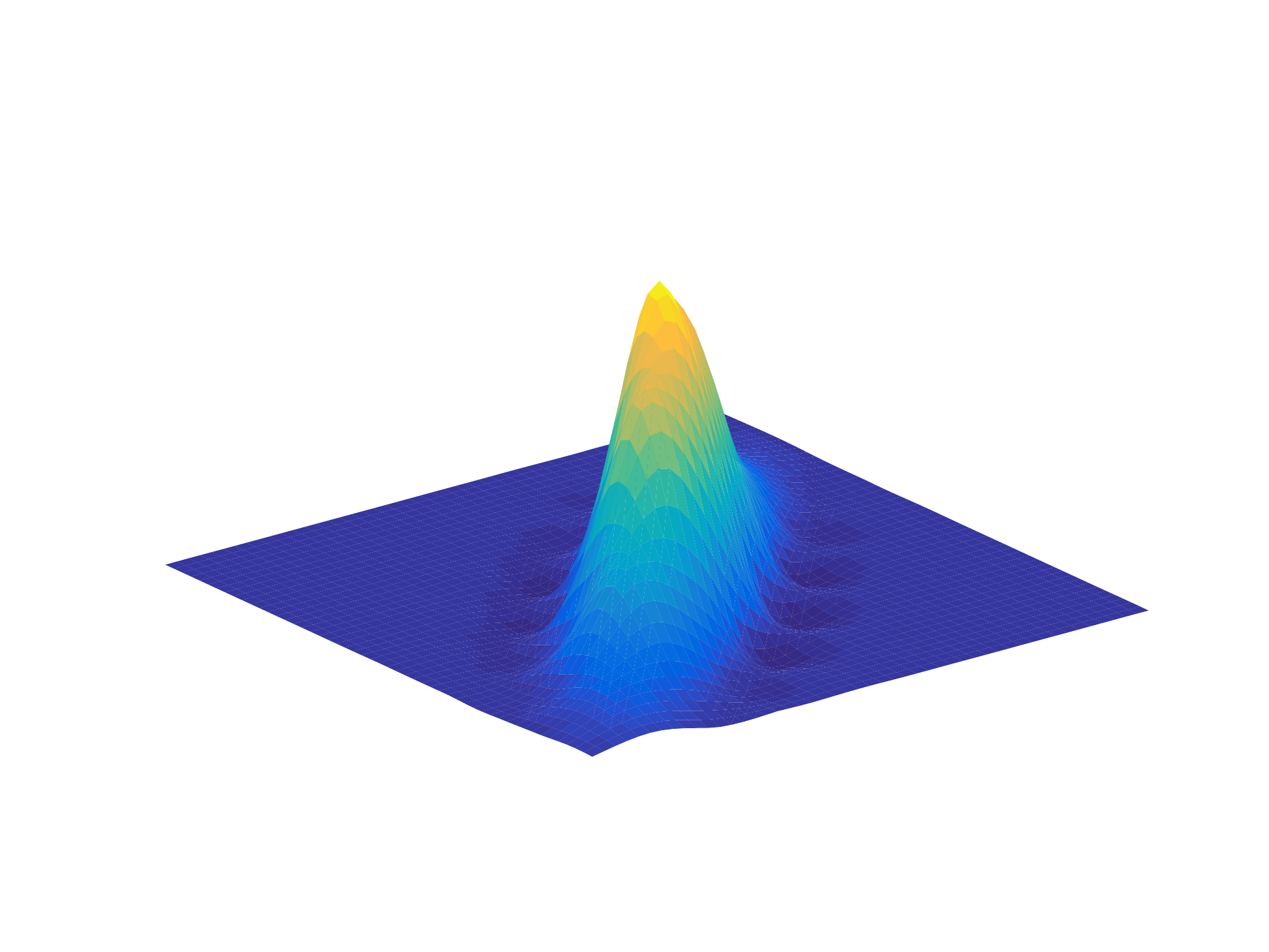}\hspace*{-15pt}
\includegraphics[width=5.5cm]{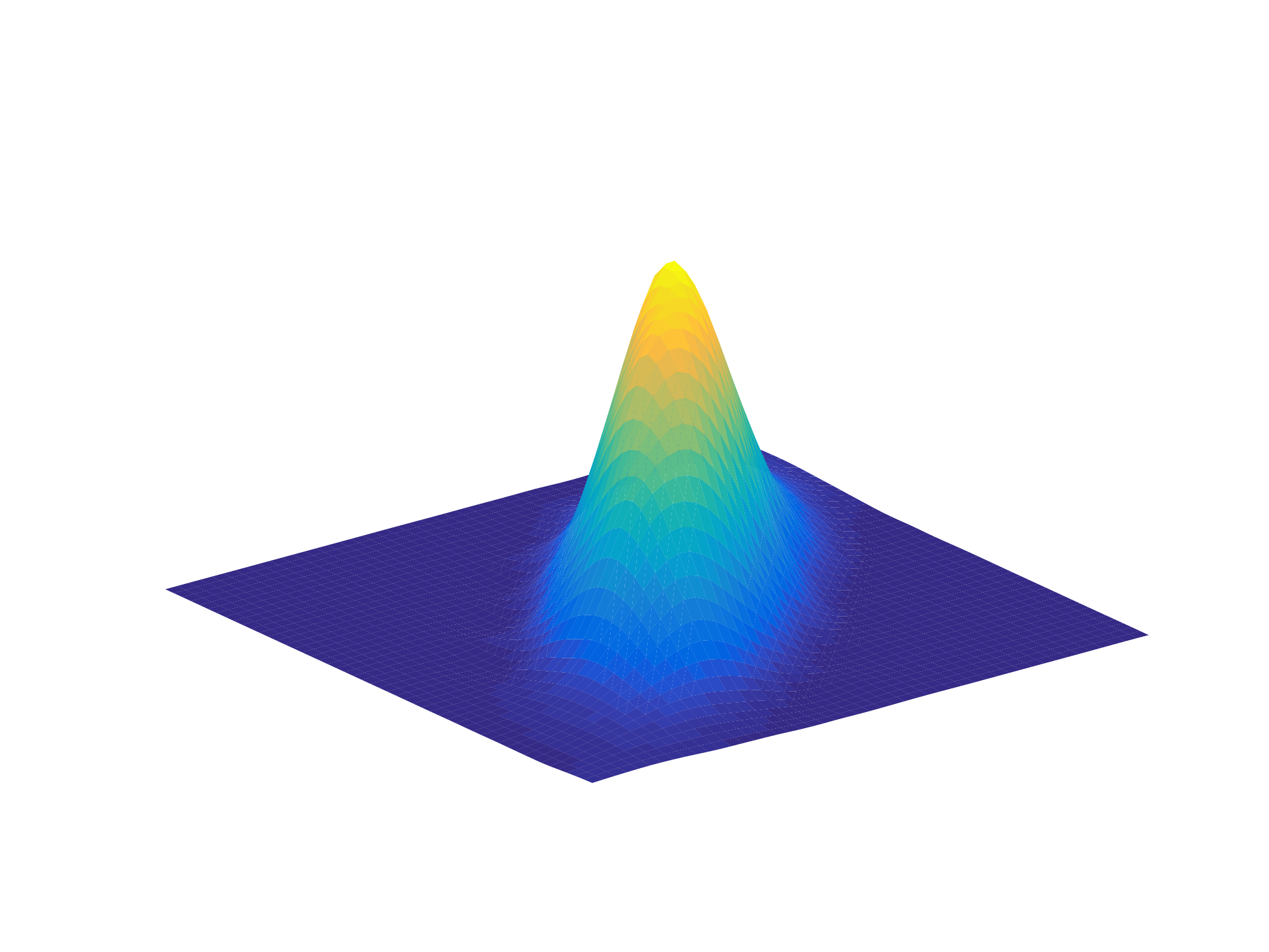}\\
\includegraphics[width=5.5cm]{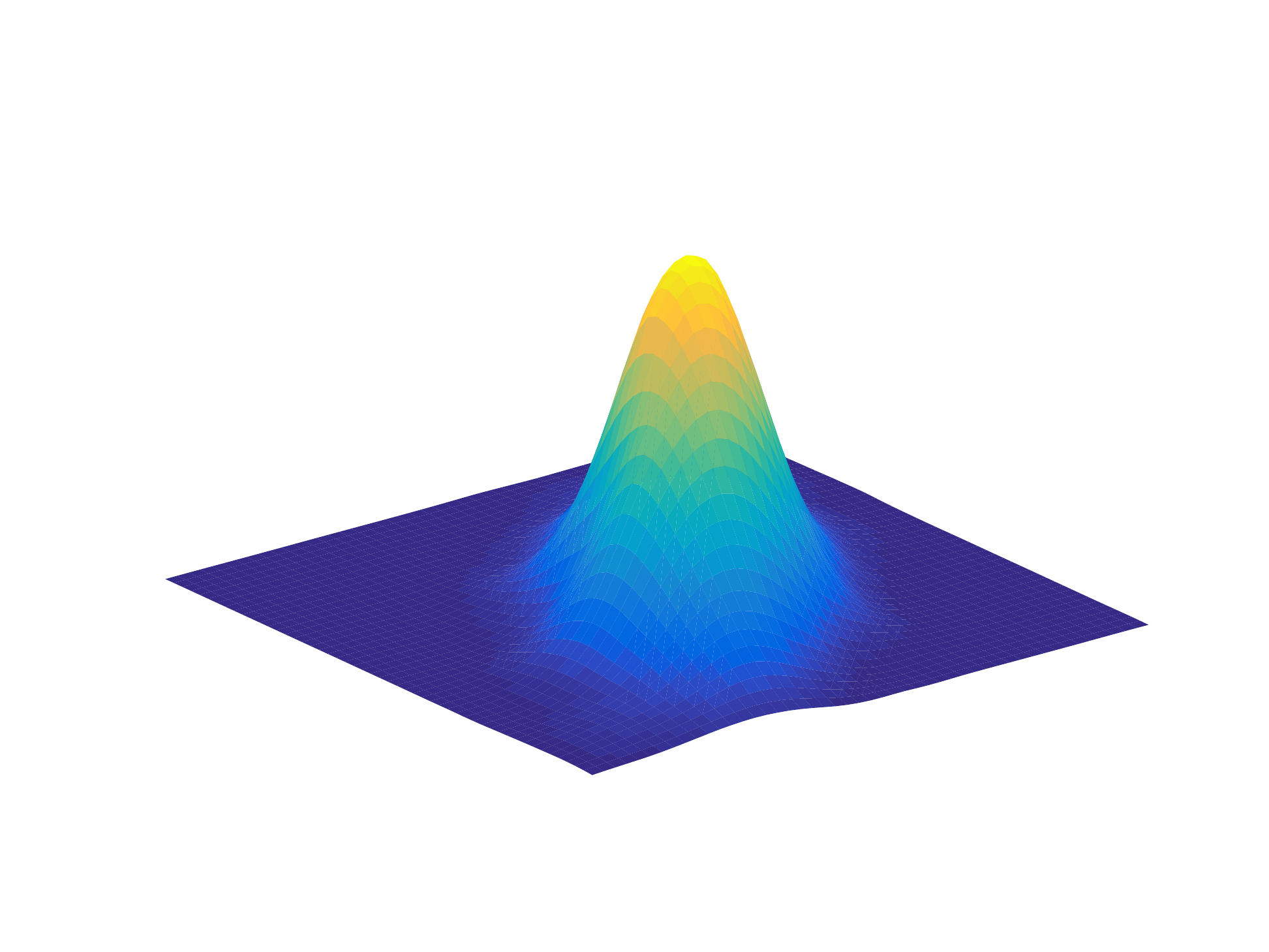}\hspace*{-15pt}
\includegraphics[width=5.5cm]{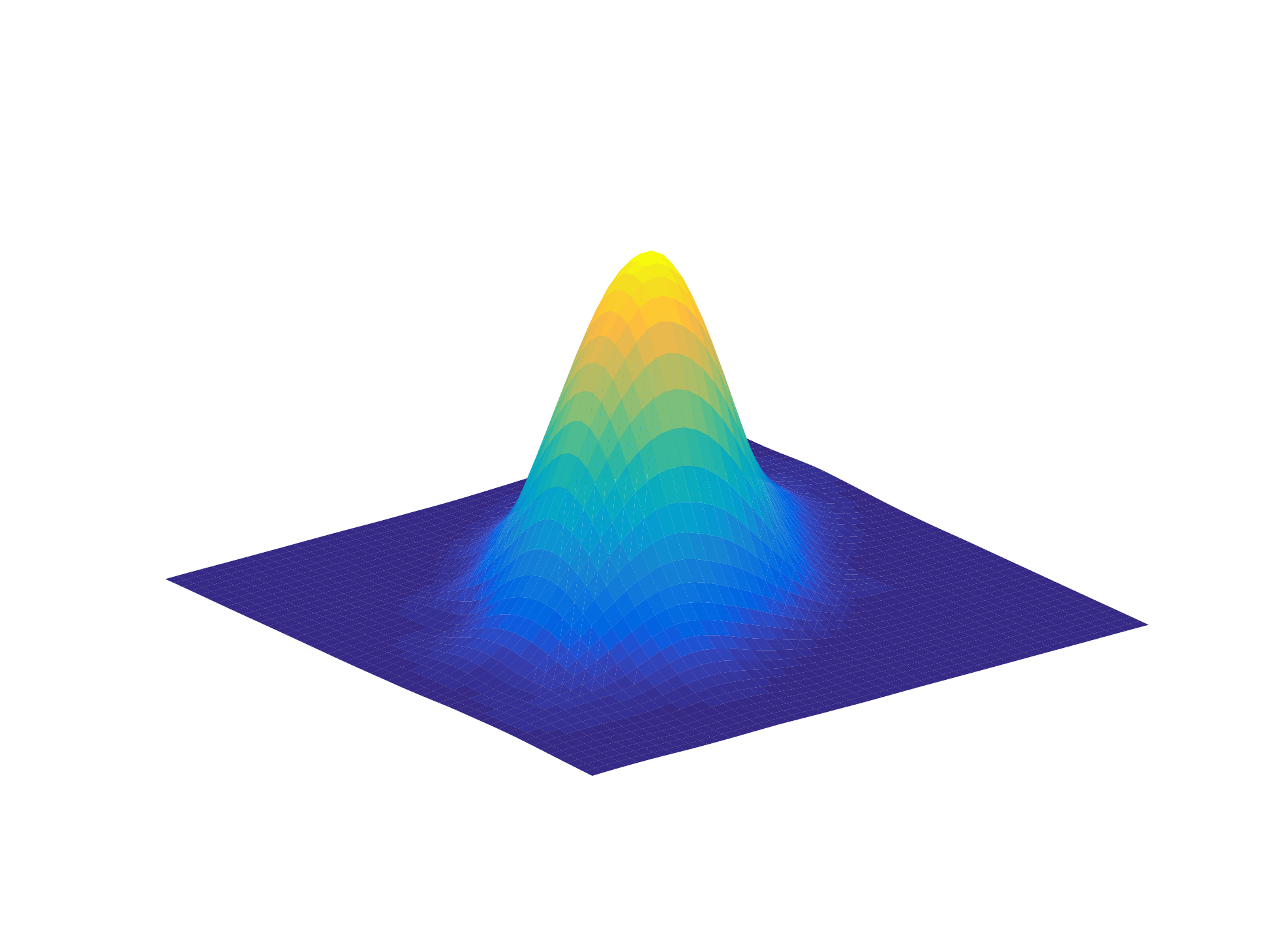}\hspace*{-15pt}
\includegraphics[width=5.5cm]{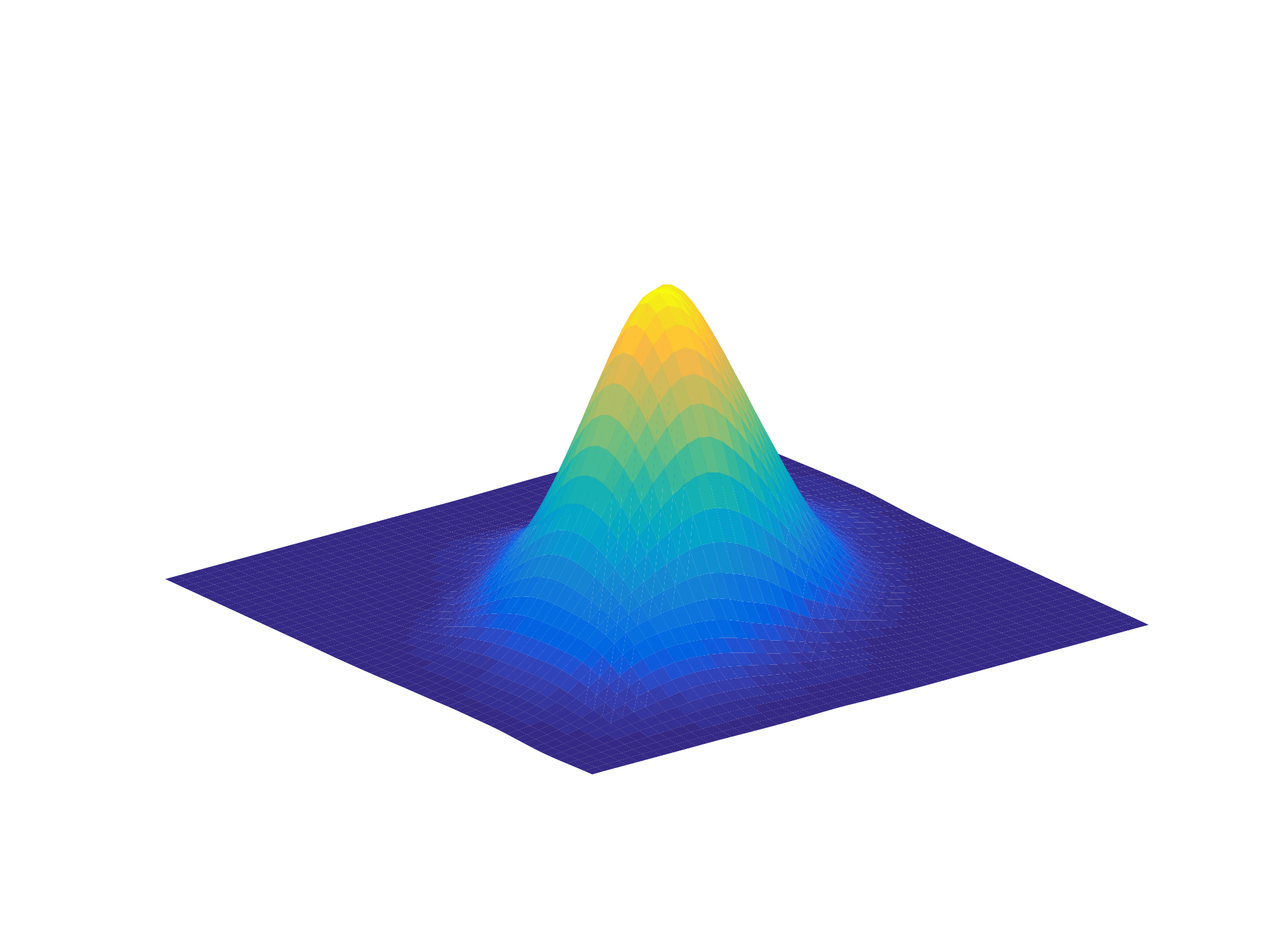}
};
\node[] (p04) at (-5.9,1.1) {\footnotesize $p=0.4$};
\node[] (p05) at (-1,1.1) {\footnotesize $p=0.5$};
\node[] (p06) at (3.9,1.1) {\footnotesize $p=0.6$};

\node[] (p01) at (-5.9,4.75) {\footnotesize $p=0.1$};
\node[] (p02) at (-1,4.75) {\footnotesize $p=0.2$};
\node[] (p03) at (3.9,4.75) {\footnotesize $p=0.3$};

\node[] (p07) at (-5.9,-2.5) {\footnotesize $p=0.7$};
\node[] (p08) at (-1,-2.5) {\footnotesize $p=0.8$};
\node[] (p09) at (3.9,-2.5) {\footnotesize $p=0.9$};
\end{tikzpicture}
\end{center}
\vspace*{-.5cm}
\caption{\emph{Joint distributions of $(S_n,K_n)$ by Monte-Carlo
simulations for $n=10^7$ and varying $p$: the case $p=0.5$
is seen to have stronger dependence than the others.}}
\label{fig-SK}
\end{figure}

These results are to be compared with the corresponding ones for
random $m$-ary search trees \cite{ChFuHwNe}, and the differences for
correlation coefficients are summarized in Table~\ref{tab-tries-mst}.
\begin{table}[!h]
\begin{center}
\begin{tabular}{|c||c|c|}
\hline
trees & $\rho(S_n, K_n)$ & $\rho(S_n, N_n)$ \\ \hline
tries & \multicolumn{1}{c|}{$\left\{\begin{tabular}{@{\ }l@{}}
   $p\neq q:\rightarrow 0$ \\ $p=q:\text{periodic}$
   \end{tabular}\right.$} & $\sim1$ \\ \hline
   \bigcell{c}{$m$-ary \\ search trees} &
   \multicolumn{2}{c|}{$\left\{\begin{tabular}{@{\ }c@{}}
   $3\le m\le26:\rightarrow0$  \\ $m\ge 27:\text{periodic}$
   \end{tabular}\right.$}   \\ \hline
\end{tabular}
\end{center}
\vspace*{-.5cm}
\caption{\emph{A comparison of the correlation coefficients defined
on random tries and on random $m$-ary search trees: the size of
$m$-ary search trees corresponds to the space requirement, and the
KPL and NPL are defined similarly as in tries.}}
\label{tab-tries-mst}
\end{table}
Furthermore, the joint distribution for $m$-ary search trees
undergoes a phase change at $m=26$: if the branching factor $m$
satisfies $3\le m\le 26$, then the space requirement is
asymptotically independent with the KPL and NPL, while for $m\ge27$,
their limiting joint distributions contain periodic fluctuations and
are dependent; see \cite{ChFuHwNe} for more information.

The dependence phenomena as those discovered in this paper are not
limited to random tries and have indeed a wider range of connections.
They also appear in different forms in other structures and
algorithms with an underlying binomial splitting process; see
Flajolet \cite{Fl06} and \cite{FuHwZa,HwFuZa} for references on data
structures, algorithms, conflict resolution protocols and stochastic
models. A typical example is the dependence between the number of
coin-tossings (or bits generated, or bits inspected) and the number
of partitioning rounds in (i) CTM tree algorithm (see Rom and Sidi
\cite{RoSi}), (ii) bucket sort (see \cite{Knuth98} and Mahmoud et al.
\cite{MFJR}), (iii) RS Algorithm for generating random permutations
(see Bacher et al. \cite{BBHT}), and (iv) initializing radio networks
(see Myoupo et al. \cite{MTR03}). We will also present the results
without proof for three other classes of digital trees in the last
section.

Our approach is mostly analytic and it is unknown if our results can
be characterized by probabilistic arguments. Indeed, we believe that
the less expected results we discovered are of special interest to
probabilists as more structural interpretation or characterization
remains to be clarified.

An extended abstract of this paper appeared in the online proceedings of the
27th International Meeting on Probabilistic, Combinatorial and
Asymptotic Methods for the Analysis of Algorithms (Krak\'ow, Poland;
July 4--8, 2016); see \cite{FuHw}. More details of the proofs, as
well as a section on extensions are added in this version (some of
them in an appendix). Also, we corrected the plots of Figure
\ref{fig-SK} in \cite{FuHw}. The extended abstract \cite{FuHw} was
peer-reviewed and we incorporated the comments and suggestions of the
referees into this version.

\section{Covariance and Correlation Coefficient}
\label{cov-and-cor}

In this section, we prove Theorem \ref{main-thm-1} on the asymptotics
of the covariance and correlation coefficient of $S_n$ and $K_n$,
where we content ourselves with a detailed sketch of the method
because similar proofs have been given in \cite{FuHwZa}. In fact,
we will also need the variances of $S_n$ and $K_n$, whose derivations
will be recalled below and which have been known for some time;
see Jacquet and R\'{e}gnier \cite{JaRe},
Kirschenhofer and Prodinger \cite{KiPr}, Kirschenhofer et al.
\cite{KiPrSz}, R\'{e}gnier and Jacquet \cite{ReJa}) and \cite{FuHwZa}. See also Table \ref{tb-SKN} for a brief
summary of these results.

Our method of proof is based on the by-now standard two-stage
approach relying on the theory of analytic de-Poissonization and
Mellin transform whose origin can be traced back to Jacquet and
R\'{e}gnier \cite{JaRe}. See Flajolet et al. \cite{FlGoDu} for a
survey on Mellin transform, and Jacquet and Szpankowski \cite{JaSz}
for a survey on analytic de-Poissonization. For the computation of
the covariance, the manipulation can be largely simplified by the
additional notions of Poissonized variance and admissible functions
further developed in our previous papers \cite{FuHwZa,HwFuZa}.

The starting point of our analysis is the recurrence satisfied by
$S_n$ and $K_n$ in (\ref{dist-rec}). A standard means in the
computation of moments of $S_n$ and $K_n$ is the Poisson generating
function, which corresponds to the moments of $S_n$ and $K_n$ with
$n$ replaced by a Poisson random variable with parameter $z$ (this
step is called \emph{Poissonization}).

More precisely, define the Poisson generating function of
$\mathbb{E}(S_n)$ and that of $\mathbb{E}(K_n)$:
\[
    \tilde{f}_{1,0}(z)
    :=e^{-z}\sum_{n\ge 0}
    \mathbb{E}(S_n)\frac{z^n}{n!},\quad\text{and}\quad
    \tilde{f}_{0,1}(z)
    :=e^{-z}\sum_{n\ge 0}\mathbb{E}(K_n)
    \frac{z^n}{n!}.
\]
Then the recurrences \eqref{dist-rec} lead to the functional equations
\begin{equation}\label{f10-f01}
    \left\{
	\begin{split} \tilde{f}_{1,0}(z)
	&=\tilde{f}_{1,0}(pz)+\tilde{f}_{1,0}(qz)+1-(1+z)e^{-z},\\
	\tilde{f}_{0,1}(z)
    &=\tilde{f}_{0,1}(pz)+\tilde{f}_{0,1}(qz)+z(1-e^{-z}).
	\end{split}\right.
\end{equation}
From these equations, we obtain, by Mellin transform techniques
\cite{FlGoDu},
\begin{equation}\label{exp-means}
	\tilde{f}_{1,0}(z)
	\sim z\mathscr{F}[\cdot](z),
	\qquad\text{and}\qquad
	\tilde{f}_{0,1}(z)
	\sim h^{-1}z\log z+z\mathscr{F}[\cdot](z),
\end{equation}
for large $|z|$ in the half-plane $\Re(z)\ge\varepsilon>0$, where $h$
denotes the entropy of Bernoulli($p$). Then, by Cauchy's integral
representation and analytic de-Poissonization techniques \cite{JaSz},
we obtain precise asymptotic approximations to $\mathbb{E}(S_n)$ and
to $\mathbb{E}(K_n)$; see \cite{FuHwZa} for more details.

Similarly, for the variances $\mathbb{V}(S_n)$ and $\mathbb{V}(K_n)$,
we introduce the Poisson generating functions of the second moments:
\[
    \tilde{f}_{2,0}(z)
    :=e^{-z}\sum_{n\ge0} \mathbb{E}(S_n^2)
    \frac{z^n}{n!}, \quad\text{and}\quad
    \tilde{f}_{0,2}(z)
    :=e^{-z}\sum_{n\ge0}\mathbb{E}(K_n^2)
    \frac{z^n}{n!},
\]
which then satisfy, by \eqref{dist-rec}, the same type of functional
equations as in \eqref{f10-f01} but with different non-homogeneous
parts. Instead of computing directly asymptotic approximations to the
second moments, it proves computational more advantageous to consider
the Poissonized variances
\begin{equation}\label{VS-VK}
    \left\{
	\begin{split}
		\tilde{V}_S(z)
		&:= \tilde{f}_{2,0}(z)-\tilde{f}_{1,0}(z)^2
		-z\tilde{f}_{1,0}'(z)^2,\\
		\tilde{V}_K(z)
		&:= \tilde{f}_{0,2}(z)-\tilde{f}_{0,1}(z)^2
		-z\tilde{f}_{0,1}'(z)^2,
	\end{split}\right.	
\end{equation}
and then following the same Mellin-de-Poissonization approach (as for
the means) to derive the first and the third asymptotic estimate in
the second column of Table~\ref{tb-SKN}; again see \cite{FuHwZa} for
details.

It remains to derive the claimed estimate for the covariance. For
that purpose, we introduce the Poisson generating function
\[
\tilde{f}_{1,1}(z) :=e^{-z}\sum_{n\ge 0}\mathbb{E}(S_nK_n)
\frac{z^n}{n!},
\]
which satisfies, again by \eqref{dist-rec},
\begin{align*}
	\tilde{f}_{1,1}(z)
	&=\tilde{f}_{1,1}(pz)+\tilde{f}_{1,1}(qz)
	+\tilde{f}_{1,0}(pz)\bigl(\tilde{f}_{0,1}(qz)+z\bigr)
	+\tilde{f}_{1,0}(qz)\bigl(\tilde{f}_{0,1}(pz)+z\bigr)\\
	&\qquad+pz\tilde{f}'_{1,0}(pz)+qz\tilde{f}'_{1,0}(qz)
	+\tilde{f}_{0,1}(pz)
	+\tilde{f}_{0,1}(qz)+z(1-e^{-z}).
\end{align*}
To compute the covariance, it is beneficial to introduce now the
\emph{Poissonized covariance} (see \eqref{VS-VK} or \cite{FuHwZa}
for similar details)
\[
	\tilde{C}(z)=\tilde{f}_{1,1}(z)
	-\tilde{f}_{1,0}(z)\tilde{f}_{0,1}(z)
	-z\tilde{f}'_{1,0}(z)\tilde{f}'_{0,1}(z),
\]
which satisfies
\begin{align}\label{Cz}
	\tilde{C}(z)
	=\tilde{C}(pz)+\tilde{C}(qz)+\tilde{h}_1(z)+\tilde{h}_2(z),
\end{align}
where
\[
	\tilde{h}_1(z)
	=pqz\left(\tilde{f}'_{1,0}(pz)-\tilde{f}'_{1,0}(qz)\right)
	\left(\tilde{f}'_{0,1}(pz)-\tilde{f}'_{0,1}(qz)\right),
\]
and
\begin{equation}\label{h2z}
\begin{split}
	\tilde{h}_2(z)
	&=ze^{-z}\left(\tilde{f}_{1,0}(pz)+\tilde{f}_{1,0}(qz)
	+p(1-z)\tilde{f}'_{1,0}(pz)
	+q(1-z)\tilde{f}'_{1,0}(qz)\right)\\
	&\quad+e^{-z}\left((1+z)\tilde{f}_{0,1}(pz)
	+(1+z)\tilde{f}_{0,1}(qz)
	-pz^2\tilde{f}'_{0,1}(pz)-qz^2\tilde{f}'_{0,1}(qz)
	\right)\\&\quad+ze^{-z}\left(1-(1+z^2)e^{-z}\right).
\end{split}
\end{equation}
Note that $\tilde{h}_1$ is zero when $p=\frac12$. Furthermore, from
(\ref{exp-means}) (which can be differentiated since they hold in a
sector $\mathscr{S} =\{z\in\mathbb{C}\ :\ \Re(z)\geq\epsilon,
\vert{\rm Arg}(z)\vert\le\theta_0\}$ with $0<\theta_0<\pi/2$ in the
complex plane), we obtain that $\tilde{h}_1(z)=O(|z|)$ and
$\tilde{h}_2(z)$ is exponentially small for large $|z|$ in
$\Re(z)>0$. Also $\tilde{h}_1(z)+\tilde{h}_2(z)=O(|z|^2)$ as $z\to0$.
Thus the Mellin transform of $\tilde{h}_1(z)+\tilde{h}_2(z)$ exists
in the strip $\langle -2,-1\rangle$, and we have then the inverse
Mellin integral representation
\begin{align}\label{inv-cz}
	\tilde{C}(z) =
	\frac1{2\pi i}\int_{-\frac32-i\infty}
	^{-\frac32+i\infty}
	\frac{\mathscr{M}[\tilde{h}_1(z)
	+\tilde{h}_2(z);s]}{1-p^{-s}-q^{-s}}\,
	z^{-s}\mathrm{d} s,
\end{align}
where $\mathscr{M}[\phi(z);s] := \int_0^\infty \phi(z)
z^{s-1}\mathrm{d} z$ denotes the Mellin transform of $\phi$; see
\cite{FlGoDu}.

Next, again from (\ref{exp-means}) we see that
$\mathscr{M}[\tilde{h}_1(z);s]$ can be analytically continued to the
vertical line $\Re(s)=-1$ and has no singularities there. Thus, by
shifting the line of integration in (\ref{inv-cz}) and computing
residues, we obtain
\[
    \tilde{C}(z) \sim z\mathscr{F}[g^{(2)}](z),
\]
uniformly for $z$ in a sector.

What is left is the computation of the Fourier coefficients of the
periodic function (see Proposition \ref{fc-cov} below). This is in
fact the most technical part of the proof because $\tilde{h}_1(z)$
contains the product of the two terms $\tilde{f}'_{1,0}(pz)
-\tilde{f}'_{1,0}(qz)$ and $\tilde{f}'_{0,1}(pz)
-\tilde{f}'_{0,1}(qz)$, and thus $\mathscr{M}[\tilde{h}_1(z);s]$
is a Mellin convolution integral. In \cite{FuHwZa}, a general
procedure was given for the simplification of such integrals (see
\cite[p.\ 24 \emph{et seq.}]{FuHwZa}). This simplification procedure
(see Appendix~\ref{sec:app} for details) and a direct application of
the theory of admissible functions of analytic de-Poissonization now
yield the following estimate for the covariance of $S_n$ and $K_n$.
\begin{pro}\label{fc-cov} The covariance of $S_n$ and $K_n$ is
asymptotically linear
\[
    \mathrm{Cov}(S_n,K_n)\sim n\mathscr{F}[g^{(2)}](n).
\]
Here
\begin{align}\label{g2k}
\begin{split}
	g^{(2)}_k
	&=\frac{\Gamma(\chi_k)}{h}\left(1-\frac{\chi_k+2}
	{2^{\chi_k+1}}\right)-\frac{1}{h^2}
	\sum_{j\in\mathbb{Z}\setminus\{0\}}
	\Gamma(\chi_{k-j}+1)(\chi_j-1)\Gamma(\chi_j)\\
	&-\frac{\Gamma(\chi_k+1)}{h^2}
	\left(\gamma+1+\psi(\chi_k+1)
	-\frac{p\log^2p+q\log^2q}{2h}\right)\\
	&+\frac{1}{h}\sum_{\ell\ge 2}
	\frac{(-1)^{\ell}(p^{\ell}+q^{\ell})}
	{\ell!(1-p^{\ell}-q^{\ell})}\,\Gamma(\chi_k+\ell-1)
	(2\ell^2-2\ell+1+\chi_k(2\ell-1)),
\end{split}
\end{align}
where $\gamma$ denotes Euler's constant, $\psi(z)$ is the digamma
function and $\chi_k$ is defined in \eqref{gk}.
\end{pro}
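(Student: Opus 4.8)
Since the discussion preceding the statement already establishes, via the inverse-Mellin representation \eqref{inv-cz}, that $\tilde C(z)\sim z\mathscr F[g^{(2)}](z)$ uniformly in a sector, the plan is to carry out the two remaining steps: (a) make the residue computation explicit so as to obtain the Fourier coefficients $g^{(2)}_k$, and (b) transfer this Poissonized estimate to $\mathrm{Cov}(S_n,K_n)$ by analytic de-Poissonization. For (a), recall that all zeros of $1-p^{-s}-q^{-s}$ satisfy $\Re(s)\ge-1$ (if $\Re(s)<-1$ then $|p^{-s}+q^{-s}|\le p^{-\Re(s)}+q^{-\Re(s)}<p+q=1$), the dominant ones on $\Re(s)=-1$ — the single point $s=-1$ in the irrational case, the progression $s=-1+\chi_k$, $k\in\mathbb Z$, in the rational case — are simple, and $\mathscr M[\tilde h_1+\tilde h_2;s]$ is regular on $\Re(s)=-1$. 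Using $p^{-\chi_k}=q^{-\chi_k}=1$ and $\frac{d}{ds}(1-p^{-s}-q^{-s})\big|_{s=-1}=p\log p+q\log q=-h$, the residue at $s=-1+\chi_k$ gives
\[
g^{(2)}_k=\frac1h\,\mathscr M\bigl[\tilde h_1+\tilde h_2;\,-1+\chi_k\bigr],
\]
so the whole task reduces to evaluating these two Mellin transforms at $s=-1+\chi_k$.

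The contribution of $\tilde h_2$ is relatively routine. From \eqref{h2z} every summand is $z^{a}e^{-z}$ times one of $\tilde f_{1,0}(pz),\tilde f_{0,1}(pz),\tilde f'_{1,0}(pz),\tilde f'_{0,1}(pz)$ (with $q$-analogues, plus pure $e^{-z}$-terms), so, by the Mellin convolution (Parseval) formula, its Mellin transform is an integral of $\Gamma$ against the transforms $\mathscr M[\tilde f_{1,0};s]=-\Gamma(s+2)\big/\bigl(s(1-p^{-s}-q^{-s})\bigr)$ and $\mathscr M[\tilde f_{0,1};s]=-\Gamma(s+1)\big/(1-p^{-s}-q^{-s})$ coming from \eqref{f10-f01}. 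Closing those inner contours and summing residues produces the $\tfrac1h\Gamma(\chi_k)\bigl(1-\tfrac{\chi_k+2}{2^{\chi_k+1}}\bigr)$-term of \eqref{g2k}, the correction involving $\gamma+1+\psi(\chi_k+1)-\tfrac{p\log^2p+q\log^2q}{2h}$ — the $\psi$ and $\gamma$ appearing because $\mathscr M[\tilde f_{0,1};\cdot]$ has a \emph{double} pole at $s=-1$, the same double pole responsible for the $h^{-1}z\log z$ term in $\mathbb E(K_n)$ — and part of the $\ell\ge2$ series, stemming from the Taylor poles of the nonhomogeneous data $1-(1+z)e^{-z}$ and $z(1-e^{-z})$.

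The genuinely technical part, and the step I expect to be the main obstacle, is $\mathscr M[\tilde h_1;s]$. Since $\tilde h_1(z)=pq\,z\,g_1(z)g_2(z)$ with $g_1(z)=\tilde f'_{1,0}(pz)-\tilde f'_{1,0}(qz)$ and $g_2(z)=\tilde f'_{0,1}(pz)-\tilde f'_{0,1}(qz)$, the Parseval--Mellin formula turns it into an iterated integral
\[
\mathscr M[\tilde h_1;s]=\frac{pq}{2\pi i}\int_{(c)}\mathscr M[g_1;w]\,\mathscr M[g_2;s+1-w]\,\mathrm dw ,
\]
with both $\mathscr M[g_i;\cdot]$ expressed through $1/(1-p^{-s}-q^{-s})$ via \eqref{f10-f01}. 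I would evaluate it by moving the $w$-contour and collecting residues: the poles $w=\chi_j$, $j\in\mathbb Z$, of one factor contribute the double sum $-h^{-2}\sum_{j\ne0}\Gamma(\chi_{k-j}+1)(\chi_j-1)\Gamma(\chi_j)$ of \eqref{g2k} — its $j=0$ residue combining with a matching piece of the $\tilde h_2$-contribution, which is why the $\gamma+\psi(\chi_k+1)$ part vanishes at $k=0$ — while the remaining poles, sitting at the shifted positive integers, account for the rest of the $\ell\ge2$ series. This is precisely the Mellin-convolution situation treated by the general simplification procedure of \cite[p.~24 \emph{et seq.}]{FuHwZa}; I would invoke it, deferring the bookkeeping to Appendix~\ref{sec:app}, to collapse the iterated integral to the closed form \eqref{g2k}.

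Finally, for (b): by construction (cf.\ \eqref{VS-VK}), $\tilde C$ is the \emph{Poissonized covariance} $\tilde f_{1,1}-\tilde f_{1,0}\tilde f_{0,1}-z\tilde f'_{1,0}\tilde f'_{0,1}$, and this particular combination is designed so that the first-order de-Poissonization corrections to $\mathbb E(S_nK_n)$, $\mathbb E(S_n)$ and $\mathbb E(K_n)$ cancel when one forms $\mathrm{Cov}(S_n,K_n)=\mathbb E(S_nK_n)-\mathbb E(S_n)\mathbb E(K_n)$, so that $\mathrm{Cov}(S_n,K_n)=\tilde C(n)+(\text{lower order})$. This is exactly the admissibility argument of \cite{FuHwZa,HwFuZa}, whose hypotheses hold here because $\tilde h_1$, $\tilde h_2$, and hence $\tilde C$, are admissible in a sector $\mathscr S=\{z:\Re(z)\ge\epsilon,\ |\mathrm{Arg}(z)|\le\theta_0\}$. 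Combined with the residue computation above, this yields $\mathrm{Cov}(S_n,K_n)\sim n\mathscr F[g^{(2)}](n)$ with $g^{(2)}_k$ as in \eqref{g2k}.
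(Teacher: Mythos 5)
Your overall strategy coincides with the paper's: shift the contour in \eqref{inv-cz} across the simple zeros $s=-1+\chi_k$ of $1-p^{-s}-q^{-s}$ (your computation $g^{(2)}_k=\frac1h\mathscr{M}[\tilde h_1+\tilde h_2;-1+\chi_k]$ and the derivative $-h$ are exactly what the paper uses), express $\tilde f_{1,0},\tilde f_{0,1}$ through their Mellin transforms $-\,(s+1)\Gamma(s)/(1-p^{-s}-q^{-s})$ and $-\,s\Gamma(s)/(1-p^{-s}-q^{-s})$, treat $\mathscr{M}[\tilde h_1;\cdot]$ as a Mellin convolution, and de-Poissonize via admissibility. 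The problem is that the actual content of the proposition --- the closed form \eqref{g2k} --- is never derived: you stop at the iterated convolution integral and ``invoke'' the simplification procedure of \cite{FuHwZa}, deferring the bookkeeping to Appendix~\ref{sec:app}, which is precisely the paper's proof of this formula. The step that makes the evaluation possible, namely the identity
\[
    \frac{pq\left(p^{-\omega}-q^{-\omega}\right)\left(p^{\omega}-q^{\omega}\right)}
    {(1-p^{1-\omega}-q^{1-\omega})(1-p^{1+\omega}-q^{1+\omega})}
    =\frac{1}{1-p^{1-\omega}-q^{1-\omega}}
    +\frac{p^{1+\omega}+q^{1+\omega}}{1-p^{1+\omega}-q^{1+\omega}},
\]
followed by the substitution $\omega\leftrightarrow\chi_k-\omega$ and a careful residue analysis (including the \emph{double} pole at $\omega=0$, where $\Gamma(\omega)$ and the zero of $1-p^{1+\omega}-q^{1+\omega}$ collide), is missing from your argument; without it the product of the two interlacing factors does not collapse into the stated series and sums.

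Moreover, the partial bookkeeping you do offer is misassigned. The Mellin transform of $\tilde h_2$ in \eqref{h2z} evaluates to $\Gamma(\chi_k)\bigl(1-\frac{\chi_k^2+\chi_k+4}{2^{\chi_k+2}}\bigr)+Y(-1+\chi_k)$, where $Y$ is an $\ell\ge2$ series; it contains no digamma or Euler-constant terms, because the relevant inner contour is pushed to $-\infty$ through simple poles of $\Gamma(\omega)$ only. The correction $\gamma+1+\psi(\chi_k+1)-\frac{p\log^2p+q\log^2q}{2h}$ that you attribute to a double pole of $\mathscr{M}[\tilde f_{0,1};\cdot]$ inside the $\tilde h_2$ computation in fact arises from the double pole at $\omega=0$ in the $\tilde h_1$ convolution --- it is exactly the excluded $j=0$ companion of the sum $-h^{-2}\sum_{j\ne0}\Gamma(\chi_{k-j}+1)(\chi_j-1)\Gamma(\chi_j)$, not a cancellation against a ``matching piece'' of $\tilde h_2$. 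Likewise, the first term $\frac{\Gamma(\chi_k)}{h}\bigl(1-\frac{\chi_k+2}{2^{\chi_k+1}}\bigr)$ of \eqref{g2k} is not the $\tilde h_2$ contribution alone but the combination of $\Gamma(\chi_k)\bigl(1-\frac{\chi_k^2+\chi_k+4}{2^{\chi_k+2}}\bigr)$ with the piece $\Gamma(\chi_k+1)\frac{\chi_k-1}{2^{\chi_k+2}}$ produced by the $\tilde h_1$ integral. So while your plan points in the right direction (and parts (a)-setup and (b) are fine at the paper's own level of detail), the derivation of the Fourier coefficients --- the heart of Proposition~\ref{fc-cov} --- is left as a gap, and the indications given of where the terms come from would not survive the actual computation.
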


\begin{Rem}
If $\frac{\log p}{\log q}\not\in\mathbb{Q}$, then only $k=0$ is
needed and the second term (the sum over $j$) on the right-hand side
of \eqref{g2k} has to be dropped. Also the first term here
$\frac{\Gamma(\chi_k)}{h} \left(1-\frac{\chi_k+2}
{2^{\chi_k+1}}\right)$ is taken to be its limit $\frac1h(\log
2+\frac12)$ as $\chi_k\to0$ when $k=0$.
\end{Rem}

The asymptotic estimate for the correlation coefficient in Theorem
\ref{main-thm-1} now follows from this and the results for the
variances of $S_n$ and $K_n$ (see Table \ref{tb-SKN}), where
expressions for $g^{(1)}_k$ and $g^{(3)}_k$ can be found, e.g., in
\cite{FuHwZa}. For convenience, we give below the expressions in the
unbiased case. Note that both $\mathscr{F}[g^{(1)}](n)$ and
$\mathscr{F}[g^{(3)}](n)$ are strictly positive; see Schachinger
\cite{Sch} for details.

In the symmetric case, an alternative expression to \eqref{g2k}
(avoiding the convolution of two Fourier series) is
\begin{align*}
	g^{(2)}_k
	&=\frac{\Gamma(\chi_k)\left(1-\frac{\chi_k^2+\chi_k+4}
	{2^{\chi_k+2}}\right)}{\log 2}
	+\frac1{\log 2}\sum_{\ell\ge 1}\frac{(-1)^{\ell}
	\Gamma(\chi_k+\ell)
	\left(\ell(2\ell+1)(\chi_k+\ell)-(\ell+1)^2\right)}
	{(\ell+1)!(2^{\ell}-1)};
\end{align*}
see the discussion of the size of tries in \cite{FuHwZa}, where a
similar alternative expression was given for $g^{(1)}_k$, which reads
\begin{align*}
	g^{(1)}_k
	=-\frac{\Gamma(\chi_k-1)\chi_k(\chi_k+1)^2}{4\log 2}
	+\frac{2}{\log 2}\sum_{\ell\ge 1}\frac{(-1)^{\ell}
	\Gamma(\chi_k+\ell)
	\ell \bigl(\ell(\chi_k+\ell)-1\bigr)}
	{(\ell+1)!(2^{\ell}-1)}.
\end{align*}
Moreover, also in \cite{FuHwZa}, the following expression for
$g^{(3)}_k$ can be found
\begin{align*}
	g^{(3)}_k
	=\frac{\Gamma(\chi_k)\left(1-\frac{\chi_k^2-\chi_k+4}
	{2^{\chi_k+2}}\right)}{\log 2}
	+\frac{2}{\log 2}\sum_{\ell\ge 1}
	\frac{(-1)^{\ell}\Gamma(\chi_k+\ell)(\ell(\chi_k+\ell-1)-1)}
	{\ell!(2^{\ell}-1)}.
\end{align*}
Note that $\chi_k=\frac{2k\pi i}{\log 2}$ and $2^{\chi_k}=1$, and the
reason of retaining $2^{\chi_k+2}$ in the denominator is to give a
uniform expression for all $k$ (notably $k=0$). These provide an
explicit expression for the periodic function $F(n)$ in Theorem
\ref{main-thm-1}. Also, since all the periodic functions have very
small amplitude, the average value of the periodic function $F(z)$ can
be well-approximated by
\[
	\frac{g^{(2)}_0}{\sqrt{g^{(1)}_0g^{(3)}_0}}
	\approx 0.9272416035\cdots.
\]

\section{Limit Law}\label{ll}

In this section, we prove Theorem \ref{main-thm-2}, part (i); the
proof of part (ii) is similar and only sketched. The key tool of the
proof is the multivariate version of the contraction method; see
Neininger and R\"{u}schendorf \cite{NeRu2}. More precisely, we will
use Theorem 3.1 in \cite{NeRu2}.

We first recall the expression for the square-root of a
positive-definite $2\times 2$ matrix
\[
	M=\begin{pmatrix}
	    a & b \\ b & c
	\end{pmatrix}.
\]
It is well-known that such a matrix has exactly one
positive-definite square root which is given by
\begin{align}\label{A-half}
	M^{\frac12}=\frac{1}{\sqrt{a+c+2\sqrt{ac-b^2}}}
	\begin{pmatrix}
		a+\sqrt{ac-b^2} & b \\
		b & c+\sqrt{ac-b^2}
	\end{pmatrix},
\end{align}
with the inverse
\begin{align}\label{A-half-inv}
	M^{-\frac12}=\frac1
	{\sqrt{(ac-b^2)\bigl(a+c+2\sqrt{ac-b^2}\bigr)}}
	\begin{pmatrix}
		c+\sqrt{ac-b^2} & -b \\
		-b & a+\sqrt{ac-b^2}
	\end{pmatrix}.
\end{align}
Now we give the proof of Theorem \ref{main-thm-2}, part (i).

\paragraph{Proof of Theorem \ref{main-thm-2}, Part (i).}
Note first that
\[
	\begin{pmatrix}
		S_n \\
		K_n
	\end{pmatrix}
	\stackrel{d}{=}
	\begin{pmatrix}
	    1 & 0 \\
		0 & 1
	\end{pmatrix}
	\begin{pmatrix} S_{B_n}\\ K_{B_n}\end{pmatrix}
	+\begin{pmatrix} 1 & 0 \\ 0 & 1\end{pmatrix}
	\begin{pmatrix} S_{n-B_n}^{*}\\ K_{n-B_n}^{*}\end{pmatrix}
	+\begin{pmatrix} 1 \\ n\end{pmatrix},
\]
where the notation is as in Section~\ref{intro}. The contraction
method was specially developed for obtaining limiting distribution
results for such recurrences; see \cite{NeRu2}.

\newpage

We need some notation. First, define
\begin{align}\label{Sigma-n}
	\widehat{\Sigma}_n:=\begin{pmatrix}
	    \mathbb{V}(S_n) & {\rm Cov}(S_n,K_n) \\
		{\rm Cov}(S_n,K_n) & \mathbb{V}(K_n)
	\end{pmatrix}.
\end{align}
This matrix is clearly positive-definite for all $n$ sufficiently
large. Next define
\[
	M_n^{(1)}
    :=\widehat{\Sigma}_n^{-\frac12}
    \widehat{\Sigma}_{B_n}^{\frac12},\qquad
	M_n^{(2)}
    :=\widehat{\Sigma}_n^{-\frac12}
    \widehat{\Sigma}_{n-B_n}^{\frac12}
\]
and
\[
	\begin{pmatrix} b_n^{(1)} \\ b_n^{(2)}\end{pmatrix}
	=\widehat{\Sigma}_n^{-\frac12}\begin{pmatrix}
	1-\mu(n)+\mu(B_n)+\mu(n-B_n)\\
	n-\nu(n)+\nu(B_n)+\nu(n-B_n)
	\end{pmatrix},
\]
where $\mu(n)={\mathbb E}(S_n)$ and $\nu(n)={\mathbb E}(K_n)$.

Now to apply the contraction method in \cite{NeRu2}, it suffices
to show that the following conditions hold
\begin{align}
	&b_n^{(i)}\stackrel{L_3}{\longrightarrow} 0,
	\qquad M_n^{(i)}\stackrel{L_3}{\longrightarrow} M_i,
	\label{cond-1}\\
	&\mathbb{E}\bigl(\|M_1\|^3_{\text{op}}
	+\|M_2\|^3_{\text{op}}\bigr)<1,
	\qquad\mathbb{E}\bigl(\|M_n^{(i)}\|^3_{\text{op}}
	\chi_{\{B_n^{(i)}\le j\}\cup\{B_n^{(i)}=n\}}\bigr)
	\longrightarrow 0\label{cond-2}
\end{align}
for $i=1,2$ and $j\in\mathbb{ N}$, where $\stackrel{L_3}
{\longrightarrow}$ denotes convergence in the $L_3$-norm,
$\|\cdot\|_{\text{op}}$ is the operator norm, $\chi_S$ denotes the
characteristic function of set $S$, $B_n^{(1)}=B_n, B_n^{(2)}=n-B_n$
and
\[
	M_1=\begin{pmatrix}
	    \sqrt{p} & 0 \\ 0 & \sqrt{p}
	\end{pmatrix},\qquad
	M_2=\begin{pmatrix}
	    \sqrt{q} & 0 \\ 0 & \sqrt{q}
	\end{pmatrix}.
\]
Then the contraction method in \cite{NeRu2} guarantees that
$(S_n,K_n)$ (centralized and normalized) converges in distribution to
the unique fixed-point with mean $0$, covariance matrix the unity
matrix and finite $L_3$-norm of
\[
	\begin{pmatrix}X_1\\ X_2\end{pmatrix}
	\stackrel{d}{=}
	\begin{pmatrix}\sqrt{p} & 0 \\ 0 & \sqrt{p}\end{pmatrix}
	\begin{pmatrix}X_1\\ X_2\end{pmatrix}
	+\begin{pmatrix}\sqrt{q} & 0 \\ 0 & \sqrt{q}\end{pmatrix}
	\begin{pmatrix}X_1^{*}\\ X_2^{*}\end{pmatrix},
\]
where $(X_1^{*},X_2^{*})$ is an independent copy of $(X_1,X_2)$.
Obviously, the bivariate normal distribution is the solution.
All this is summarized as follows.

\begin{pro}\label{asym-clt} The following convergence in distribution
holds:
\[
	\widehat{\Sigma}_n^{-\frac12}
	\begin{pmatrix}
		S_n-\mathbb{E}(S_n)\\
		K_n-\mathbb{E}(K_n)
	\end{pmatrix}
		\stackrel{d}{\longrightarrow} \mathcal{N}_2(0, I_2).
\]
\end{pro}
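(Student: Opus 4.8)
The plan is to verify the four hypotheses \eqref{cond-1}--\eqref{cond-2} of the multivariate contraction theorem (Theorem~3.1 in \cite{NeRu2}) for the normalized vector $Y_n:=\widehat{\Sigma}_n^{-\frac12}\bigl(S_n-\mathbb{E}(S_n),K_n-\mathbb{E}(K_n)\bigr)^\intercal$, and then to identify the limiting fixed point; I give the argument for $p\ne\tfrac12$ (the case of Theorem~\ref{main-thm-2}(i)), the symmetric case being touched on at the end. Two of the conditions are essentially free. The contraction estimate in \eqref{cond-2} holds because $\|M_1\|_{\text{op}}=\sqrt p$ and $\|M_2\|_{\text{op}}=\sqrt q$, so $\mathbb{E}\bigl(\|M_1\|_{\text{op}}^3+\|M_2\|_{\text{op}}^3\bigr)=p^{3/2}+q^{3/2}<p+q=1$ since $t^{3/2}<t$ on $(0,1)$. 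For the degeneracy conditions one splits $\{B_n^{(i)}\le j\}\cup\{B_n^{(i)}=n\}$: the events $\{B_n=n\}$ and $\{n-B_n=n\}$ have probabilities $p^n$ and $q^n$, exponentially small, while on $\{B_n\le j\}$ the matrix $\widehat{\Sigma}_{B_n}$ ranges over a fixed finite set and $\|\widehat{\Sigma}_n^{-\frac12}\|_{\text{op}}\to0$; a uniform bound $\|\widehat{\Sigma}_m^{\frac12}\widehat{\Sigma}_n^{-\frac12}\|_{\text{op}}\le C(m/n)^{\frac12}$ for $0\le m\le n$ — a routine consequence of the variance and covariance asymptotics of Table~\ref{tb-SKN} and Proposition~\ref{fc-cov}, once these are available with error terms uniform in $n$ — then makes both contributions vanish.

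The remaining two $L_3$-convergences in \eqref{cond-1} are the substance. Everything rests on one structural fact: every periodic function $\mathscr{F}[g]$ produced by the Mellin analysis is invariant under the self-similar scalings $z\mapsto pz$ and $z\mapsto qz$, simply because $p^{-\chi_k}=q^{-\chi_k}=1$ for all $k$ by the definition of $\chi_k$ in \eqref{gk}. For the matrices $M_n^{(1)}=\widehat{\Sigma}_n^{-\frac12}\widehat{\Sigma}_{B_n}^{\frac12}$ and $M_n^{(2)}=\widehat{\Sigma}_n^{-\frac12}\widehat{\Sigma}_{n-B_n}^{\frac12}$, the point (for $p\ne\tfrac12$) is that $\widehat{\Sigma}_n$ is \emph{nearly diagonal} — by Theorem~\ref{main-thm-1} its normalized off-diagonal entry $\mathrm{Cov}(S_n,K_n)/\sqrt{\mathbb{V}(S_n)\mathbb{V}(K_n)}=\rho(S_n,K_n)$ tends to $0$ — and \emph{strongly anisotropic} (its eigenvalues are of orders $n$ and $n\log n$). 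A short spectral analysis using these two features, together with $B_n/n\to p$, $\log B_n/\log n\to1$ in every $L_r$ and the self-similarity above (which forces $\mathbb{V}(S_{B_n})/\mathbb{V}(S_n)\to p$ and $\mathbb{V}(K_{B_n})/\mathbb{V}(K_n)\to p$, the oscillating factors cancelling exactly), yields $M_n^{(1)}\stackrel{L_3}{\longrightarrow}\sqrt p\,I_2$ and, symmetrically, $M_n^{(2)}\stackrel{L_3}{\longrightarrow}\sqrt q\,I_2$.

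For the toll vector $\bigl(b_n^{(1)},b_n^{(2)}\bigr)^\intercal=\widehat{\Sigma}_n^{-\frac12}v_n$ with $v_n=\bigl(1-\mu(n)+\mu(B_n)+\mu(n-B_n),\ n-\nu(n)+\nu(B_n)+\nu(n-B_n)\bigr)^\intercal$, one first checks that each coordinate of $v_n$ has mean zero, by taking expectations in the mean recurrences obtained from \eqref{dist-rec}. The crucial point is the size of $v_n$ in $L_3$. Expanding $\mu(n)=n\,\mathscr{F}[\cdot](n)+O(1)$ and $\nu(n)=h^{-1}n\log n+n\,\mathscr{F}[\cdot](n)+O(\log n)$ around the random split $B_n=pn+(B_n-pn)$ and using once more that the periodic parts are fixed by $z\mapsto pz,qz$, the $\Theta(\sqrt n)$-fluctuations in the first coordinate cancel, so $v_n^{(1)}$ stays bounded in $L_3$; in the second coordinate the periodic parts cancel likewise, but $B_n\log B_n+(n-B_n)\log(n-B_n)-n\log n=-nh+\log(p/q)(B_n-pn)+O\bigl((B_n-pn)^2/n\bigr)$ leaves the term $h^{-1}\log(p/q)(B_n-pn)$, so $\|v_n^{(2)}\|_{L_3}=\Theta(\sqrt n)$ when $p\ne\tfrac12$. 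Reading from \eqref{A-half-inv} that the entries of $\widehat{\Sigma}_n^{-\frac12}$ are of order $n^{-1/2}$ (the $S$-direction), $(n\log n)^{-1/2}$ (the $K$-direction) and smaller by a factor $\log n$ off the diagonal, one obtains $b_n^{(1)}=O(n^{-1/2})+O(1/\log n)$ and $b_n^{(2)}=O\bigl(1/(\sqrt n\log n)\bigr)+O(1/\sqrt{\log n})$, both tending to $0$; the decisive gain for $b_n^{(2)}$ is exactly the extra $\sqrt{\log n}$ coming from $\mathbb{V}(K_n)\asymp n\log n$ in the biased case. Carrying out this expansion with enough uniformity to obtain $L_3$ (rather than merely distributional) convergence — which relies on precise de-Poissonized expansions of $\mu$ and $\nu$ and on polynomial moment bounds for $S_n,K_n$ inherited from \eqref{dist-rec} — is the main obstacle, alongside the spectral bookkeeping in the matrix step.

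Once \eqref{cond-1}--\eqref{cond-2} hold, Theorem~3.1 of \cite{NeRu2} yields that $Y_n$ converges, in the probability metric used there and hence in distribution, to the unique law with mean $0$, covariance $I_2$ and finite third absolute moment solving $Y\stackrel{d}{=}\sqrt p\,Y+\sqrt q\,Y^*$ with $Y^*$ an independent copy of $Y$. By the stability of the Gaussian law, $\sqrt p\,\mathcal{N}_2(0,I_2)+\sqrt q\,\mathcal{N}_2(0,I_2)\stackrel{d}{=}\mathcal{N}_2(0,I_2)$, so $\mathcal{N}_2(0,I_2)$ is such a solution and is therefore the limit; note that $Y_n$ has covariance exactly $I_2$ for every large $n$, since the matrix $\widehat{\Sigma}_n$ of \eqref{Sigma-n} is by construction the covariance matrix of $(S_n,K_n)^\intercal$. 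This proves Proposition~\ref{asym-clt} and hence Theorem~\ref{main-thm-2}(i). The proof of part~(ii) follows the same scheme, the differences being that when $p=\tfrac12$ the identity $\log(p/q)=0$ removes the surviving $\sqrt n$-term of $v_n^{(2)}$ — so one must push the expansion one order further, which is why it is only sketched — and that $\mathbb{V}(K_n)$ is then of linear order, so $\widehat{\Sigma}_n/n$ (oscillations included) converges to a fixed positive-definite periodic matrix invariant under $n\mapsto B_n\approx n/2$, changing the shape of the normalization but making the matrix conditions even more direct.
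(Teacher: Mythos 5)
Your proposal is correct in substance and takes the same overall route as the paper: both verify conditions \eqref{cond-1}--\eqref{cond-2} of Theorem 3.1 in \cite{NeRu2} for $\widehat{\Sigma}_n^{-\frac12}\bigl(S_n-\mu(n),K_n-\nu(n)\bigr)^\intercal$, with the same limit matrices $M_1=\sqrt{p}\,I_2$, $M_2=\sqrt{q}\,I_2$, and identify the limit through the fixed-point equation, which $\mathcal{N}_2(0,I_2)$ solves. The differences are local, and worth recording. For the toll condition you expand $\mu$ and $\nu$ around $B_n\approx pn$ yourself, using the invariance $\mathscr{F}[g](pz)=\mathscr{F}[g](z)=\mathscr{F}[g](qz)$ (since $p^{-\chi_k}=q^{-\chi_k}=1$) to show that the periodic fluctuations cancel and that the only surviving large contribution is $h^{-1}\log(p/q)(B_n-pn)$ in the $K$-coordinate; your resulting bounds $b_n^{(1)}=O(n^{-1/2})+O(1/\log n)$ and $b_n^{(2)}=O\bigl(1/(\sqrt{n}\log n)\bigr)+O(1/\sqrt{\log n})$ agree with what the paper obtains. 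The paper instead imports the two $L_3$-limits $\bigl(1-\mu(n)+\mu(B_n)+\mu(n-B_n)\bigr)/\sqrt{n}\to0$ and $\bigl(n-\nu(n)+\nu(B_n)+\nu(n-B_n)\bigr)/\sqrt{n\log n}\to0$ from the univariate normality proofs (\cite{FuLe2}, and its analogue for $K_n$) and only bounds the deterministic prefactors read off \eqref{A-half-inv}; this is shorter, and it absorbs the uniformity issue you flag as the ``main obstacle'' into the cited results --- note that this obstacle concerns only the mean functions evaluated at $B_n$, so your remark that moment bounds for $S_n,K_n$ are needed there is superfluous. For $M_n^{(i)}$ the paper argues entrywise via the strong law for $B_n/n$, smoothness of the periodic factors and dominated convergence, rather than your spectral near-diagonality/anisotropy bookkeeping, but the essential ingredients (including the self-similarity you make explicit, which the paper uses tacitly) are identical; and your explicit check $p^{3/2}+q^{3/2}<1$ together with the splitting of the degeneracy events in \eqref{cond-2} fills in details the paper dismisses with ``follows along similar lines,'' so there is no conflict there either.
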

\begin{proof}
We only check \eqref{cond-1} because the second condition of (\ref{cond-2}) follows along similar lines and the first condition of (\ref{cond-2}) follows from \eqref{cond-1} in view of
\[
	\|M_1\|_{\text{op}}=\sqrt{p}\qquad\text{and}
	\qquad \|M_2\|_{\text{op}}=\sqrt{q}.
\]

We start with proving \eqref{cond-1} for $b_n^{(i)}$ for which
we use the notations
\[
	\Omega_1(n)=\mathbb{V}(S_n),
	\quad\Omega_2(n)={\rm Cov}(S_n,K_n),
	\quad\Omega_3(n)=\mathbb{V}(K_n)
\]
and
\[
    D(n)=\Omega_1(n)\Omega_3(n)-\Omega_2(n)^2.
\]
Also define
\[
    R(n) = \Omega_1(n)+\Omega_3(n)+2\sqrt{D(n)}.
\]

Then, by \eqref{A-half}, we see that
\begin{align*}
	b_n^{(1)}&=(1-\mu(n)+\mu(B_n)
	+\mu(n-B_n))\frac{\Omega_3(n)
	+\sqrt{D(n)}}{\sqrt{D(n)R(n)}}\\
	&\quad-(n-\nu(n)+\nu(B_n)
	+\nu(n-B_n))\frac{\Omega_2(n)}{\sqrt{D(n)R(n)}}
\end{align*}
and a similar expression for $b_n^{(2)}$ holds. From the normality of
both $S_n$ and $K_n$ (proved for $S_n$ via the contraction method in
\cite{FuLe2} and a similar method of proof also applies to $K_n$), we
have
\[
	\frac{1-\mu(n)+\mu(B_n)+\mu(n-B_n)}{\sqrt{n}}
	\stackrel{L_3}{\longrightarrow} 0
\]
and
\[
	\frac{n-\nu(n)+\nu(B_n)+\nu(n-B_n)}{\sqrt{n\log n}}
	\stackrel{L_3}{\longrightarrow} 0.
\]
Moreover, we have
\[
	\sqrt{n}\,\frac{\Omega_3(n)+\sqrt{D(n)}}{\sqrt{D(n)R(n)}}
	\sim\frac{1}
	{\sqrt{\mathscr{F}[g^{(1)}](n)}},
\]
and
\[
	\sqrt{n\log n}\,\frac{\Omega_2(n)}
	{\sqrt{D(n)R(n)}}
	\sim\frac{\mathscr{F}[g^{(2)}](n)}
	{\lambda\sqrt{\log n\mathscr{F}[g^{(1)}](n)}},
\]
where $g^{(1)}, g^{(2)}$ and $\lambda$ are as above. Thus, both
sequences are bounded and, consequently, we obtain the claimed result
with $L_3$-convergence above. Similarly, one proves \eqref{cond-1}
for $b_n^{(2)}$.

Next, we consider $M_n^{(i)}$. Here, we only show the claim for the
$(1,1)$ entry of $M_n^{(1)}$ (denoted by $M_n^{(1)}(1,1)$) all other
cases being treated similarly. First, observe that by definition and
matrix square-root, we have
\begin{align*}
	M_n^{(1)}(1,1)
	=\frac{\sqrt{R(n)}}{\sqrt{R(B_n)}}
	\cdot\frac{(\Omega_3(n)
	+\sqrt{D(n)})(\Omega_1(B_n)+\sqrt{D(B_n)})
	-\Omega_2(n)\Omega_2(B_n)}{\sqrt{D(n)R(n)}}.
\end{align*}
Now, from the strong law of large numbers for the binomial
distribution
\[
    \frac{B_n}{n}\stackrel{\text{a.s.}}{\longrightarrow} p
\]
and from Taylor series expansion (note that all periodic functions are
infinitely differentiable), we have
\[
	\frac{\sqrt{R(n)}}{\sqrt{R(B_n)}}
	\stackrel{\text{a.s.}}{\longrightarrow}\frac{1}{\sqrt{p}},
\]
and
\[
	\frac{(\Omega_3(n)
	+\sqrt{D(n)})(\Omega_1(B_n)+\sqrt{D(B_n)})
	-\Omega_2(n)\Omega_2(B_n)}{\sqrt{D(n)R(n)}}
	\stackrel{\text{a.s.}}{\longrightarrow}p.
\]
Thus, $M_n^{(1)}(1,1)\stackrel{\text{a.s.}} {\longrightarrow}
\sqrt{p}$ from which the claim follows by the dominated convergence
theorem.
\end{proof}

Next, set
\[
    \widetilde{\Sigma}_n
	:=\begin{pmatrix}
	    n\mathscr{F}[g^{(1)}](n) & 0 \\
		0 & \lambda n\log n
	\end{pmatrix}.
\]
Then, we have the following simple lemma.
\begin{lmm}
We have, as $n\rightarrow\infty$,
\[
    \widehat{\Sigma}_n^{-\frac12}\widetilde{\Sigma}_n^{\frac12}
	\rightarrow I_2.
\]
\end{lmm}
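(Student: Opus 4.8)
The plan is to compute the four entries of $\widehat{\Sigma}_n^{-\frac12}\widetilde{\Sigma}_n^{\frac12}$ in closed form by means of \eqref{A-half} and \eqref{A-half-inv}, and then to read off from the known orders of magnitude that the diagonal entries tend to $1$ while the off-diagonal ones tend to $0$. I would keep the notation $\Omega_1(n)=\mathbb{V}(S_n)$, $\Omega_2(n)=\mathrm{Cov}(S_n,K_n)$, $\Omega_3(n)=\mathbb{V}(K_n)$, $D(n)=\Omega_1(n)\Omega_3(n)-\Omega_2(n)^2$ and $R(n)=\Omega_1(n)+\Omega_3(n)+2\sqrt{D(n)}$ from the preceding proof, and write $\widetilde{a}_n=n\mathscr{F}[g^{(1)}](n)$, $\widetilde{c}_n=\lambda n\log n$. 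Since $\widetilde{\Sigma}_n$ is diagonal, \eqref{A-half} gives $\widetilde{\Sigma}_n^{\frac12}=\mathrm{diag}\bigl(\sqrt{\widetilde{a}_n},\sqrt{\widetilde{c}_n}\bigr)$; and, as $\widehat{\Sigma}_n$ is positive-definite for $n$ large, \eqref{A-half-inv} yields
\[
\widehat{\Sigma}_n^{-\frac12}\widetilde{\Sigma}_n^{\frac12}
=\frac{1}{\sqrt{D(n)R(n)}}
\begin{pmatrix}
\bigl(\Omega_3(n)+\sqrt{D(n)}\bigr)\sqrt{\widetilde{a}_n} & -\Omega_2(n)\sqrt{\widetilde{c}_n}\\
-\Omega_2(n)\sqrt{\widetilde{a}_n} & \bigl(\Omega_1(n)+\sqrt{D(n)}\bigr)\sqrt{\widetilde{c}_n}
\end{pmatrix}.
\]

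Next I would record the relevant orders. Since we are in the case $p\neq\frac12$, we have $\lambda>0$, and Table~\ref{tb-SKN} together with Schachinger's positivity of $\mathscr{F}[g^{(1)}]$ gives $\Omega_1(n)\asymp n$ with $\Omega_1(n)/\widetilde{a}_n\to1$, and $\Omega_3(n)\sim\lambda n\log n$ so that $\Omega_3(n)/\widetilde{c}_n\to1$; Proposition~\ref{fc-cov} gives $\Omega_2(n)=O(n)$. The decisive smallness is
\[
\frac{|\Omega_2(n)|}{\sqrt{\Omega_1(n)\Omega_3(n)}}
=O\!\left(\frac{n}{\sqrt{n\cdot n\log n}}\right)
=O\!\left(\frac1{\sqrt{\log n}}\right)\longrightarrow0,
\]
whence $\Omega_2(n)^2=o\bigl(\Omega_1(n)\Omega_3(n)\bigr)$ and $D(n)\sim\Omega_1(n)\Omega_3(n)>0$ for $n$ large. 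Moreover $\Omega_1(n)=o(\Omega_3(n))$ and $\sqrt{\Omega_1(n)\Omega_3(n)}=o(\Omega_3(n))$, so $\Omega_3(n)+\sqrt{D(n)}\sim\Omega_3(n)$, $\Omega_1(n)+\sqrt{D(n)}\sim\sqrt{D(n)}\sim\sqrt{\Omega_1(n)\Omega_3(n)}$, and $R(n)\sim\Omega_3(n)$, hence $\sqrt{D(n)R(n)}\sim\Omega_3(n)\sqrt{\Omega_1(n)}$.

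Substituting these estimates into the four entries then completes the proof: the $(1,1)$-entry is asymptotic to $\Omega_3(n)\sqrt{\widetilde{a}_n}/(\Omega_3(n)\sqrt{\Omega_1(n)})=\sqrt{\widetilde{a}_n/\Omega_1(n)}\to1$; the $(2,2)$-entry to $\sqrt{\Omega_1(n)\Omega_3(n)}\,\sqrt{\widetilde{c}_n}/(\Omega_3(n)\sqrt{\Omega_1(n)})=\sqrt{\widetilde{c}_n/\Omega_3(n)}\to1$; the $(1,2)$-entry to $-\Omega_2(n)\sqrt{\widetilde{c}_n}/(\Omega_3(n)\sqrt{\Omega_1(n)})\sim-\Omega_2(n)/\sqrt{\Omega_1(n)\Omega_3(n)}=O(1/\sqrt{\log n})\to0$ (using $\widetilde{c}_n\sim\Omega_3(n)$); and the $(2,1)$-entry to $-\Omega_2(n)\sqrt{\widetilde{a}_n}/(\Omega_3(n)\sqrt{\Omega_1(n)})\sim-\Omega_2(n)/\Omega_3(n)=O(1/\log n)\to0$. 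Therefore $\widehat{\Sigma}_n^{-\frac12}\widetilde{\Sigma}_n^{\frac12}\to I_2$.

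There is no deep difficulty here; the only point requiring care is the asymptotic bookkeeping, and in particular the observation that $\mathbb{V}(K_n)=\Theta(n\log n)$ strictly dominates both $\mathbb{V}(S_n)=\Theta(n)$ and $\mathrm{Cov}(S_n,K_n)=O(n)$. This domination is exactly what makes the off-diagonal entries of the product vanish (at rate $1/\sqrt{\log n}$) and leaves the diagonal entries governed by the ratios $\Omega_1(n)/\widetilde{a}_n$ and $\Omega_3(n)/\widetilde{c}_n$, each of which converges to $1$ by the very choice of $\widetilde{\Sigma}_n$. (One should resist the temptation of a shortcut through $A_n^\intercal A_n=\widetilde{\Sigma}_n^{1/2}\widehat{\Sigma}_n^{-1}\widetilde{\Sigma}_n^{1/2}\to I_2$: this convergence does hold, but $A_n^\intercal A_n\to I_2$ does not by itself force $A_n\to I_2$, so the entrywise computation above is what actually delivers the claim.)
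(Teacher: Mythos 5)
Your proof is correct and follows essentially the same route as the paper's: both apply the explicit formulas \eqref{A-half} and \eqref{A-half-inv}, then read off the asymptotics entry by entry using $\Omega_1(n)\asymp n$, $\Omega_3(n)\sim\lambda n\log n$, $\Omega_2(n)=O(n)$. You simply carry out all four entries where the paper does only the $(1,2)$-entry and says ``the other entries are treated similarly''; your closing remark about why $A_n^\intercal A_n\to I_2$ alone would not suffice is a sensible caution but not something the paper needs to address, since it also computes entrywise.
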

\pf This follows by a straightforward computation using the
expressions for the matrix square-root \eqref{A-half} and its inverse
\eqref{A-half-inv}. For example, the entry $(1,2)$ of
$\widehat{\Sigma}_n^{-\frac12} \widetilde{\Sigma}_n^{\frac12}$ (where
we use the notations from the proof of the previous proposition)
satisfies
\[
	-\frac{\Omega_2(n)\sqrt{\lambda n\log n}}
	{\sqrt{D(n)R(n)}}
	\sim-\frac{\mathscr{F}[g^{(2)}](n)}
	{\sqrt{\lambda\log n\mathscr{F}[g^{(1)}](n)}},
\]
which tends to $0$ as claimed. The other entries are treated
similarly, \qed

Theorem \ref{main-thm-2}, part (i) now follows from this lemma and
Proposition \ref{asym-clt}.

Next, we sketch the (similar) proof of Theorem \ref{main-thm-2}, part
(ii).

\paragraph{Proof of Theorem \ref{main-thm-2}, Part (ii).} The proof
runs along similar lines as in Part (i). The only difference is that
now it is not entirely obvious that $\widehat{\Sigma}_n$ is positive
definite. Note, however, that from the discussion in the
introduction, this matrix is positive-definite if and only if
$\Sigma_n$ (defined in Theorem \ref{main-thm-2}) is positive
definite. This is ensured by the following lemma.

\begin{lmm}
$\Sigma_n$ is positive-definite for all $n$ large enough.
\end{lmm}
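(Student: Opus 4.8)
The plan is to unwind positive-definiteness of the symmetric $2\times2$ matrix $\Sigma_n$ into the two usual scalar (leading-principal-minor) conditions and then check each from facts already at hand. Since we are in the case $p=\frac12$ we have $\log\frac pq=0$, hence $\lambda=\lambda_p=0$, so the matrix in question is
\[
\Sigma_n=n\begin{pmatrix}
\mathscr{F}[g^{(1)}](n) & \mathscr{F}[g^{(2)}](n)\\
\mathscr{F}[g^{(2)}](n) & \mathscr{F}[g^{(3)}](n)
\end{pmatrix}.
\]
A real symmetric matrix $\bigl(\begin{smallmatrix}a&b\\ b&c\end{smallmatrix}\bigr)$ is positive-definite precisely when $a>0$ and $ac-b^2>0$; as the scalar factor $n>0$ is harmless, it suffices to show, for all large $n$,
\[
\mathscr{F}[g^{(1)}](n)>0\qquad\text{and}\qquad
\mathscr{F}[g^{(1)}](n)\mathscr{F}[g^{(3)}](n)-\mathscr{F}[g^{(2)}](n)^2>0.
\]

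The first inequality is immediate from the positivity of $\mathscr{F}[g^{(1)}]$ recorded above (Schachinger \cite{Sch}). For the second, I would factor out the two positive quantities $\mathscr{F}[g^{(1)}](n)$ and $\mathscr{F}[g^{(3)}](n)$ and rewrite the determinant as
\[
\mathscr{F}[g^{(1)}](n)\mathscr{F}[g^{(3)}](n)\bigl(1-F(n)^2\bigr),
\qquad
F(n)=\frac{\mathscr{F}[g^{(2)}](n)}{\sqrt{\mathscr{F}[g^{(1)}](n)\mathscr{F}[g^{(3)}](n)}},
\]
where $F$ is precisely the periodic function of Theorem~\ref{main-thm-1}. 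Since $\mathscr{F}[g^{(3)}]>0$ as well (Schachinger \cite{Sch}), the claim reduces to the single assertion that $\sup_n|F(n)|<1$.

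To get $|F(n)|\le1$ one can argue softly: for a fixed $m$ the periodicity of $\mathscr{F}[g^{(i)}]$ in $\log n$ with period $\log 2$ gives $F(2^jm)=F(m)$ for every $j$, while $\rho(S_{2^jm},K_{2^jm})\to F(m)$ by Theorem~\ref{main-thm-1}; since each $\rho(S_{2^jm},K_{2^jm})$ is a genuine correlation coefficient, hence in $[-1,1]$, so is $F(m)$. This argument, however, only yields $|F|\le1$, which would still allow $1-F(n)^2$ to vanish; the strict separation $\sup_n|F(n)|<1$ that we actually need has to be extracted from the explicit Fourier expansions. There the coefficients $g^{(1)}_k,g^{(2)}_k,g^{(3)}_k$ decay exponentially in $|k|$ (through the $\Gamma(\chi_k)$ factors), so $F$ is computable with rigorous error control and one checks $|F(n)-0.9272\cdots|\le1.5\times10^{-5}$; in particular $1-F(n)^2$ is bounded below by a positive constant uniformly in $n$. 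Combined with the positivity of $\mathscr{F}[g^{(1)}]$ and $\mathscr{F}[g^{(3)}]$, this gives the second inequality, so $\Sigma_n$ is positive-definite for all $n$ large enough. The one genuinely delicate point is exactly this quantitative bound $\sup_n|F(n)|<1$: soft considerations give only $|F|\le1$, and the strict inequality seems to require the explicit formulas (equivalently, the numerical amplitude estimate for $F$).
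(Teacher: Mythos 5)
Your approach coincides with the paper's: reduce positive-definiteness to $\mathscr{F}[g^{(1)}](n)>0$ and $\det(\Sigma_n)>0$, then factor
\[
\det(\Sigma_n)=n^2\,\mathscr{F}[g^{(1)}](n)\,\mathscr{F}[g^{(3)}](n)\bigl(1-F(n)^2\bigr)
\]
and invoke the positivity of $\mathscr{F}[g^{(1)}]$, $\mathscr{F}[g^{(3)}]$ (Schachinger) together with $\sup_n\lvert F(n)\rvert<1$. The paper's proof is terser --- after this factorization it simply writes ``from which the result follows'' and relegates the needed bound to a remark --- but the content is the same. You have in addition done two things the paper does not: you give a soft argument (limit of bona fide correlation coefficients over the subsequence $n=2^jm$) establishing $\lvert F\rvert\le 1$ for free, and you correctly flag that the lemma really needs the strict inequality $\sup_n\lvert F(n)\rvert<1$, which must come either from the explicit Fourier expansions and a rigorous numerical bound on the amplitude, or from the rigorous non-numerical argument the paper alludes to (the analogue of Proposition~3 of \cite{FuLe}). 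That diagnosis of where the genuine work lies is sound and slightly sharper than the paper's own exposition.
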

\pf It suffices to show that $\det(\Sigma_n)>0$ for all $n$ large
enough. Indeed, we have
\begin{align*}
    \det(\Sigma_n)&=n^2\mathscr{F}[g^{(1)}](n)
    \mathscr{F}[g^{(3)}](n)-n^2(\mathscr{F}[g^{(1)}](n))^2\\
    &=n^2\mathscr{F}[g^{(1)}](n)
    \mathscr{F}[g^{(3)}](n)(1-F(\log_2 n)^2),
\end{align*}
from which the result follows.

Note that this in addition shows the stronger result
$\det(\Sigma_n)\ge dn^2$ for all $n$ large enough where $d>0$. (A
proof avoiding numerical computations can be performed using the same
approach as in Proposition 3 of \cite{FuLe}.) \qed

The rest of the proof is similar as in the asymmetric case and is
omitted.

\section{Extensions}

In this section, we show that the dependence phenomena we discovered
here on random binary tries (Theorem~\ref{main-thm-1} and
Theorem~\ref{main-thm-2}) also find their appearance in other trees
and structures whose subtree-sizes and sub-structure-sizes are
dictated by a binomial or a multinomial distribution.

\newpage

For simplicity, we consider in this section only three varieties of
random digital trees: random $m$-ary tries, random PATRICIA tries and
random bucket digital search trees; see \cite{FuHwZa} for more
potential examples with the same splitting principles.

\paragraph{$m$-ary Tries.} It is straightforward to extend our tries
constructed from binary input strings to inputs from an $m$-ary
alphabets, $m\ge2$. In this case, the resulting trie becomes an
$m$-ary tree (since each node now has $m$ subtrees one belonging to
each letter). As a random model, we assume that bits are generated
independently at random with the $i$-th letter occurring with
probability $p_i$, where $p_1+\cdots+p_m=1$ and $0<p_i<1$ for $1\le
i\le m$.

The size and the key path length (which we again denote by $S_n$ and
$K_n$) in such random $m$-ary tries satisfy the recurrences
\[
    \left\{
	\begin{split}
		S_n&\stackrel{d}{=}S_{I_n^{(1)}}^{(1)}
        +\cdots+S_{I_n^{(m)}}^{(m)}+1,\\
		K_n&\stackrel{d}{=}K_{I_n^{(1)}}^{(1)}
        +\cdots+K_{I_n^{(m)}}^{(m)}+n,\\
	\end{split}\qquad(n\ge2),\right.
\]
with the initial conditions $S_n=K_n=0$ for $n\le1$, where
$(S_{n}^{(i)})$ and $(K_n^{(i)})$ are independent copies of $(S_n)$
and $(K_n)$, respectively, for $1\le i\le m$, and
\begin{align}\label{I}
    {\mathbb P}(I_n^{(1)}=j_1,\ldots,I_n^{(m)}=j_m)
    =\binom{n}{j_1,\ldots,j_m}p_1^{j_1}\cdots p_m^{j_m},
\end{align}
for all $j_1,\ldots,j_m\ge 0$ with $j_1+\cdots+j_m=n$.

The pair $(S_n,K_n)$ satisfies the same type of properties as those
described in Theorem~\ref{main-thm-1} and Theorem~\ref{main-thm-2}
for binary tries, where the symmetric case here corresponds to
$p_1=\cdots=p_m=1/m$ and all other cases are asymmetric. Only the
expressions for $g^{(1)}_k,g^{(2)}_k,g^{(3)}_k$ and $\lambda$ are different
but they can be computed via the same analytic tools as those used in
\cite{FuHwZa}. For the sake of simplicity, we only give the
expressions in the symmetric case ($\chi_k=2k\pi i/\log m$) as
follows:
\begin{align*}
	g^{(1)}_k
	&=\frac{\Gamma(\chi_k-1)
	\bigl(\chi_k-\frac{\chi_k^3+2\chi_k^2+5\chi_k}
	{2^{\chi_k+2}}\bigr)}{\log m}
	+\frac{2}{\log m}\sum_{\ell\ge 1}\frac{(-1)^{\ell}
	\Gamma(\chi_k+\ell)
	\ell \bigl(\ell(\chi_k+\ell)-1\bigr)}
	{(\ell+1)!(m^{\ell}-1)}, \\
	g^{(2)}_k
	&=\frac{\Gamma(\chi_k)\bigl(1-\frac{\chi_k^2+\chi_k+4}
	{2^{\chi_k+2}}\bigr)}{\log m}
	+\frac1{\log m}\sum_{\ell\ge 1}\frac{(-1)^{\ell}
	\Gamma(\chi_k+\ell)
	\left(\ell(2\ell+1)(\chi_k+\ell)-(\ell+1)^2\right)}
	{(\ell+1)!(m^{\ell}-1)}, \\
	g^{(3)}_k
	&=\frac{\Gamma(\chi_k)\bigl(1-\frac{\chi_k^2-\chi_k+4}
	{2^{\chi_k+2}}\bigr)}{\log m}
	+\frac{2}{\log m}\sum_{\ell\ge 1}
	\frac{(-1)^{\ell}\Gamma(\chi_k+\ell)(\ell(\chi_k+\ell-1)-1)}
	{\ell!(m^{\ell}-1)}.
\end{align*}
Note that the variance of the size was considered in \cite{FuLe}, but
no explicit expression was given for the Fourier coefficients of the
periodic function.

With the help of these expressions, we obtain the following
numerical approximations to the average value of the periodic
function of the correlation coefficient between $S_n$ and $K_n$ in
the symmetric case. We see that they differ little.
\begin{table}[!h]
\begin{center}
\begin{tabular}{|c||c|c|c|c|c|} \hline
	$m$ & $2$ & $3$ & $4$ & $5$ & $6$\\ \hline
	$\begin{array}{c}
	\text{average value of the}\\
	\text{periodic fluctuations}
	\end{array}$ & $0.927$
	& $0.925$ & $0.924$ & $0.922$ & $0.921$
	\\ \hline
\end{tabular}
\end{center}
\vspace*{-.4cm}
\caption{\emph{Numerical approximations to the average values of
the periodic functions $F(n)$ arising in the asymptotic estimate
$\rho(S_n,K_n)\sim F(n)$ for the symmetric case and $m=2,\dots,6$.}}
\end{table}

\paragraph{PATRICIA Tries.} A simple idea to increase the efficiency
of tries is to remove all internal nodes with one-way branching. The
resulting tree is called a PATRICIA trie; here PATRICIA is an acronym
of ``Practical Algorithm To Retrieve Information Coded In
Alphanumeric''.

\newpage

We use the same random model as we used above for $m$-ary tries and
consider the size and key-path length of PATRICIA tries (which we
again denote by $S_n$ and $K_n$). Then they satisfy the recurrences
\[
	\begin{split}
		S_n&\stackrel{d}{=}S_{I_n^{(1)}}^{(1)}
        +\cdots+S_{I_n^{(m)}}^{(m)}+T_n,\\
		K_n&\stackrel{d}{=}K_{I_n^{(1)}}^{(1)}
        +\cdots+K_{I_n^{(m)}}^{(m)}+nT_n,\\
	\end{split}\qquad(n\ge2),
\]
with the initial conditions $S_n=K_n=0$ for $n\ge1$, where
$I_n^{(i)}$ is defined as in \eqref{I} above, $(S_n^{(i)})$ and
$(K_n^{(i)})$ are independent copies of $(S_n)$ and $(K_n)$,
respectively, and
\[
    T_n=\begin{cases}
	    1,&\text{if}\ I_n^{(i)}<n\
	        \text{for all}\ 1\le i\le m;\\
	    0,&\text{otherwise}.
    \end{cases}
\]

Note that for $m=2$, the size is deterministic. We thus assume $m\ge
3$ to avoid trivialities. Then the dependence of $(S_n,K_n)$
satisfies \emph{mutatis mutandis} Theorem~\ref{main-thm-1} and
Theorem~\ref{main-thm-2}. In particular, the required changes for
$g^{(1)}_k,g^{(2)}_k,g^{(3)}_k$ in the symmetric case are given as follows
($\chi_k=2k\pi i/\log m$):
\begin{align*} \small
    g^{(1)}_k&=\frac{(m-1)\Gamma(\chi_k-1)}
    {\log m}\Bigg(-1-\frac{(m-1)(\chi_k+1)}{2^{\chi_k}}\\
    &\quad+\biggl(1-\frac{1}{m}\biggr)^{-\chi_k}
    \biggl(1-\frac{(m-1)\chi_k+m+1}
    {2^{\chi_k}}\biggr)+\left(2
    -\frac{1}{m}\right)^{-\chi_k}(2(m-1)\chi_k+2m)\Bigg)\\
    &\quad+\frac{2(m-1)^2}{\log m}
    \sum_{\ell\ge 1}\frac{(-1)^{\ell+1}\Gamma(\ell+\chi_k)
    \ell\bigl(1-\left(1-\frac{1}{m}\right)^{\ell}\bigr)
    \bigl(1-\left(1-\frac{1}{m}\right)^{-\ell-\chi_k}\bigr)}
    {(\ell+1)!(m^{\ell}-1)}, \\
	g^{(2)}_k&=\frac{\Gamma(\chi_k)}
	{\log m}\left(\left(1-\frac{1}{m}\right)^{-\chi_k}
	\left(1-\frac{(m-1)\chi_k+2}{2^{\chi_k+1}}\right)+
	\left(2-\frac{1}{m}\right)^{-\chi_k}
	\frac{(m-1)^2\chi_k}{2m-1}\right)\\
	&\quad+\frac{m-1}{\log m}
	\sum_{\ell\ge 1}\frac{(-1)^{\ell+1}\Gamma(\ell+\chi_k)\ell}
	{(\ell+1)!(m^{\ell}-1)}\Bigg((\ell+1)
	\left(1-\frac{1}{m}\right)^{\ell}\\
	&\quad+(\chi_k-1)\left(1-\frac{1}{m}\right)^{-\chi_k}
	-(\ell+\chi_k)
	\left(1-\frac{1}{m}\right)^{-\ell-\chi_k}\Bigg),\\
	g^{(3)}_k&=\frac{\Gamma(\chi_k)}
	{\log m}\left(1-\frac{1}{m}\right)^{-\chi_k}
	\left(1+\frac{\chi_k}{m-1}-
	\frac{(m-1)\chi_k^2-(m-3)\chi_k+4(m-1)}
	{(m-1)2^{\chi_k+2}}\right)\\
	&\quad+\frac{2(m-1)^{-\chi_k}}{\log m}
	\sum_{\ell\ge 1}\frac{(-1)^{\ell}
	\Gamma(\ell+\chi_k+1)}{(\ell-1)!(m^{\ell}-1)}.
\end{align*}

These expressions are also valid for $m=2$, where $g^{(1)}_k$ and
$g^{(2)}_k$ can be shown to be identically zero. Note that the result
for the variance of the key-path length was already derived in
\cite{FuHwZa} (for $m=2$) and that for the size was established in
\cite{FuLe} but without a precise expression for the Fourier
coefficients.

Again, we can use the above expressions to obtain the average value
of the periodic function of the correlation coefficient between $S_n$
and $K_n$ in the symmetric case. Note that unlike tries, these values
increase with $m$.
\begin{table}[!h]
\begin{center}
\begin{tabular}{|c||c|c|c|c|} \hline
	$m$ & $3$ & $4$ & $5$ & $6$\\ \hline
	$\begin{array}{c}
	\text{average value of the}\\
	\text{periodic fluctuations}
	\end{array}$ & $0.751$
	& $0.814$ & $0.841$ & $0.856$
	\\ \hline
\end{tabular}
\end{center}
\vspace*{-.4cm}
\caption{\emph{Numerical approximations to the average values of
the periodic functions $F(n)$ arising in the asymptotic estimate
$\rho(S_n,K_n)\sim F(n)$ for the symmetric case and $m=3,\dots,6$.}}
\end{table}

\paragraph{Bucket Digital Search Trees.} Digital search trees (DST)
represent yet another class of digital tree structures; see
\cite{Knuth98,Mahmoud92} for more information. In contrast to tries
and PATRICIA tries, they only have one type of nodes where data are
stored. More precisely, given a set of data consisting of $n$
infinite $0$-$1$ strings, a DST is constructed as follows: if $n=1$,
then the DST consists of only one node holding the sole string;
otherwise, the first string is stored in the root and all others are
directed to the subtrees according to their first bit being $0$ or
$1$; then, the subtrees are built recursively but by using
consecutive bits to split the data.

Clearly, the size of such a DST
is deterministic and equals the input cardinality. We consider
instead a bucket version with an additional capacity $b\ge2$,
allowing each node holding up to $b$ strings and nodes having
subtrees only when they are filled up.

We adopt the same Bernoulli random model as for random tries and
consider the size and key-path length in random bucket digital search
trees (again denoted by $S_n$ and $K_n$), which then satisfy
\[
	\left\{
	\begin{split}
		S_{n+b}&\stackrel{d}{=}S_{B_n}+S_{n-B_n}^{*}+1,\\
		K_{n+b}&\stackrel{d}{=}K_{B_n}+K_{n-B_n}^{*}+n,
	\end{split}\right. \qquad(n\ge 0),
\]
with the initial conditions $S_0=K_0=K_1=\cdots=K_{b-1}=0$ and
$S_1=\cdots=S_{b-1}=1$, where $(S_n^*)$ and $(K_n^*)$ are independent
copies of $(S_n)$ and $(K_n)$, respectively.

The same dependence phenomena as those described in
Theorem~\ref{main-thm-1} and Theorem~\ref{main-thm-2} also hold for
the pair $(S_n,K_n)$. The computation of the sequences
$g^{(1)}_k,g^{(2)}_k,g^{(3)}_k$ is nevertheless more intricate. In the
asymmetric case, one can again use analytic de-Poissonization and
Mellin transform techniques, however, the resulting expressions are
less explicit. On the other hand, in the symmetric case, explicit
expressions for $g^{(1)}_k,g^{(2)}_k,g^{(3)}_k$ are available via the
Poisson-Laplace-Mellin method from \cite{HwFuZa}. As the expressions
are long, we omit them here. Note that the results for the variances
of $S_n$ and $K_n$ have already been obtained in \cite{HwFuZa}.

\paragraph{Other Shape Parameters.} Theorem~\ref{main-thm-1} and
Theorem~\ref{main-thm-2} also extend to pairs of random variables
where the size is replaced by the number of various patterns (such as
the number of internal-external nodes discussed, e.g., by Flajolet
and Sedgewick in \cite{FlSe}) and the key-path length is replaced by
other notions of the path length (such as the total path length of
internal-external nodes).

\section*{Acknowledgments}

The first author acknowledges partial supported by MOST under the
grants MOST-104-2923-M-009-006-MY3 and MOST-105-2115-M-009-010-MY2.
We also thank the helpful comments by the referees for the extended
abstract \cite{FuHw} of this paper.

\appendix
\addcontentsline{toc}{section}{Appendices}

\section{A Sketch of Proof of \eqref{g2k}}\label{sec:app}

We sketch here some details of how the expression \eqref{g2k} for
$g^{(2)}_k$ in Proposition \ref{fc-cov} is obtained. The method we
use is based on that introduced in \cite{FuHwZa}; see p. 25 \emph{et
seq.}

\newpage

First, by moving the line of integration in (\ref{inv-cz}) to the
right and using the residue theorem, we have
\[
    g^{(2)}_k=\frac{G_2(-1+\chi_k)}{h},
\]
where $G_2(s)={\mathscr M}[\tilde{h}_1(z)+\tilde{h}_2(z);s]$. Note
that in the above expression and in what follows, if $\log p/\log q$
is irrational, then only the term with $k=0$ is retained. Thus, our
problem boils down to the computation of $G_2(s)$.

We first consider the Mellin transform of $\tilde{h}_2(z)$ which is
easier to handle. By the expression \eqref{h2z} for $\tilde{h}_2(z)$
from Section \ref{cov-and-cor}, the Mellin transform is given by
\[
    {\mathscr M}[\tilde{h}_2(z);s]
    =\Gamma(s+1)\left(1-\frac{s^2+3s+6}{2^{s+3}}\right)+Y(s),
\]
where
\begin{align*}
    Y(s)=\int_{0}^{\infty}&e^{-z}
    \Big(z\tilde{f}_{1,0}(pz)+pz(1-z)
    \tilde{f}'_{1,0}(pz)+z\tilde{f}_{1,0}(qz)
    +qz(1-z)\tilde{f}'_{1,0}(qz)\\
    &+(1+z)\tilde{f}_{0,1}(pz)
    -pz^2\tilde{f}'_{0,1}(pz)+(1+z)
    \tilde{f}_{0,1}(qz)-qz^2\tilde{f}'_{0,1}(qz)
    \Big)z^{s-1}{\rm d}z.
\end{align*}
Observe that by applying the Mellin transform and its inverse
to (\ref{f10-f01}), we obtain
\begin{equation}\label{expf10}
    \tilde{f}_{1,0}(z)
    =\frac{1}{2\pi i}\int_{(-3/2)}
    \frac{-(\omega+1)\Gamma(\omega)}{1-p^{-\omega}
    -q^{-\omega}}\,z^{-\omega}{\rm d}\omega
\end{equation}
and
\begin{equation}\label{expf01}
    \tilde{f}_{0,1}(z)
    =\frac{1}{2\pi i}\int_{(-3/2)}
    \frac{-\omega\Gamma(\omega)}
    {1-p^{-\omega}-q^{-\omega}}\,z^{-\omega}{\rm d}\omega.
\end{equation}
Substituting these into the integral representation of $Y(s)$ and
interchanging the integrals, we see that
\begin{align*}
    Y(s)&=\frac{1}{2\pi i}\int_{(-3/2)}\frac{\Gamma(\omega)
    \Gamma(s-\omega)}{1-p^{-\omega}-q^{-\omega}}
    \left(p^{-\omega}+q^{-\omega}\right)
    \left(\omega^2-(s-\omega)(2\omega^2+3\omega+1)\right)
    {\rm d}\omega\\
    &=\sum_{\ell\ge 2}\frac{(-1)^{\ell}
    (p^{\ell}+q^{\ell})}
    {\ell!(1-p^{\ell}-q^{\ell})}\,\Gamma(s+\ell)
    \left(\ell^2-(s+\ell)(2\ell^2-3\ell+1)\right),
\end{align*}
where the last line follows from moving the vertical line of
integration to minus infinity and summing over all the residues of
the poles encountered.

For ${\mathscr M}[\tilde{h}_1(z);-1+\chi_k]$, we use the expression
for $\tilde{h}_1(z)$ in Section \ref{cov-and-cor},
(\ref{expf10}) and (\ref{expf01}), and Mellin convolution, giving
\[
    \frac{1}{2\pi i}\int_{(0)+}\frac{pq\left(p^{-\omega}
    -q^{-\omega}\right)\left(p^{\omega}-q^{\omega}\right)}
    {(1-p^{1-\omega}-q^{1-\omega})
    (1-p^{1+\omega}-q^{1+\omega})}\,
    \Gamma(\omega+1)(\chi_k-\omega-1)
    \Gamma(\chi_k-\omega){\rm d}\omega,
\]
where the integration path is the imaginary axis with a small
indentation to the right at the zeros of
$1-p^{1-\omega}-q^{1-\omega}$. Now by the decomposition
\[
    \frac{pq\left(p^{-\omega}-q^{-\omega}\right)
    \left(p^{\omega}-q^{\omega}\right)}
    {(1-p^{1-\omega}-q^{1-\omega})
    (1-p^{1+\omega}-q^{1+\omega})}=
    \frac{1}{1-p^{1-\omega}-q^{1-\omega}}
    +\frac{p^{1+\omega}+q^{1+\omega}}
    {1-p^{1+\omega}-q^{1+\omega}},
\]
the above integral is rewritten as
\begin{equation}\label{asym-case}
    \frac{1}{2\pi i}\int_{(0)+}\left(\frac{1}
    {1-p^{1-\omega}-q^{1-\omega}}
    +\frac{p^{1+\omega}+q^{1+\omega}}
    {1-p^{1+\omega}-q^{1+\omega}}
    \right)\Gamma(\omega+1)(\chi_k-\omega-1)
    \Gamma(\chi_k-\omega){\rm d}\omega.
\end{equation}
We break now this integral into two parts according to the two terms
in the bracket. For the first part, we use the substitution
$\omega\leftrightarrow\chi_k-\omega$ and standard residue calculus,
and obtain
\begin{align*}
    \frac{1}{2\pi i}
    \int_{(0)-}&\frac{1}{1-p^{1+\omega}-q^{1+\omega}}\,
    \Gamma(\chi_k-\omega+1)(\omega-1)
    \Gamma(\omega){\rm d}\omega\\
    =&-\frac{\Gamma(\chi_k+1)}{h}
    \left(\gamma+1+\psi(\chi_k+1)-
    \frac{p\log^2p+q\log^2q}{2h}\right)\\
    &-\frac{1}{h}\sum_{j\in{\mathbb Z}\setminus\{0\}}
    \Gamma(\chi_{k-j}+1)(\chi_j-1)\Gamma(\chi_j)\\
    &+\frac{1}{2\pi i}\int_{(0)+}\frac{1}
    {1-p^{1+\omega}-q^{1+\omega}}\,
    \Gamma(\chi_k-\omega+1)(\omega-1)
    \Gamma(\omega){\rm d}\omega,
\end{align*}
where the second line follows by moving the line of integration over
the imaginary axis and $\psi(s)$ denotes the derivative of
$\log\Gamma(s)$. Next, note that
\begin{align*}
    \frac{1}{2\pi i}\int_{(0)+}&\frac{1}
    {1-p^{1+\omega}-q^{1+\omega}}\,
    \Gamma(\chi_k-\omega+1)(\omega-1)
    \Gamma(\omega){\rm d}\omega \\
    &=\frac{1}{2\pi i}\int_{(0)+}\Gamma(\chi_k-\omega+1)
    (\omega-1)\Gamma(\omega){\rm d}\omega\\
    &\qquad+\frac{1}{2\pi i}\int_{(0)+}
    \frac{p^{1+\omega}+q^{1+\omega}}
    {1-p^{1+\omega}-q^{1+\omega}}
    \Gamma(\chi_k-\omega+1)(\omega-1)
    \Gamma(\omega){\rm d}\omega.
\end{align*}
The first integral on the right-hand side is a Mellin convolution
integral and can be evaluated explicitly as
\begin{align*}
    \frac{1}{2\pi i}\int_{(0)+}
    \Gamma(\chi_k-\omega+1)(\omega-1)
    \Gamma(\omega){\rm d}\omega
    &=\int_{0}^{\infty}e^{-z}z^{\chi_k}
    (1-(1-z)e^{-z}){\rm d}z
    -\Gamma(\chi_k+1)\\ &=\Gamma(\chi_k+1)
    \frac{\chi_k-1}{2^{\chi_k+2}}.
\end{align*}
For the second integral, we move the line of integration to infinity
and use the residue theorem, yielding
\begin{align*}
    \frac{1}{2\pi i}\int_{(0)+}
    &\frac{p^{1+\omega}+q^{1+\omega}}
    {1-p^{1+\omega}-q^{1+\omega}}\,
    \Gamma(\chi_k-\omega+1)(\omega-1)
    \Gamma(\omega){\rm d}\omega\\
    &=\sum_{\ell\ge 2}\frac{(-1)^{\ell}(p^{\ell}+q^{\ell})}
    {(\ell-1)!(1-p^{\ell}-q^{\ell})}\,
    \Gamma(\chi_k+\ell-1)(\ell-1)(\chi_k+\ell-2).
\end{align*}
In a similar way, the second part of (\ref{asym-case}) has the series
representation
\begin{align*}
    \frac{1}{2\pi i}\int_{(0)+}&\frac{p^{1+\omega}+q^{1+\omega}}
    {1-p^{1+\omega}-q^{1+\omega}}\,\Gamma(\omega+1)(\chi_k-\omega-1)
    \Gamma(\chi_k-\omega){\rm d}\omega\\
    &=\sum_{\ell\ge 2}\frac{(-1)^{\ell}(p^{\ell}+q^{\ell})}
    {(\ell-1)!(1-p^{\ell}-q^{\ell})}\,
    \Gamma(\chi_k+\ell-1)\ell(\chi_k+\ell-1).
\end{align*}

Since
\begin{align*}
    G_2(-1+\chi_k)
	&= \mathscr{M}[\tilde{h}_1(z);-1+\chi_k]
	+\mathscr{M}[\tilde{h}_2(z);-1+\chi_k]\\
    &=\mathscr{M}[\tilde{h}_1(z);-1+\chi_k]
	+\Gamma(\chi_k)\left(1-\frac{\chi_k^2+\chi_k+4}
	{2^{\chi_k+2}}\right)+Y(-1+\chi_k),
\end{align*}
we then deduce \eqref{g2k} by collecting all expressions.
\end{document}